\providecommand{\U}[1]{\protect\rule{.1in}{.1in}}
\newtheorem{theorem}{Theorem}
{}
\newtheorem{lemma}{Lemma}
{}
\newtheorem{notation}{Notation}
\newtheorem{summary}{Summary}
\newenvironment{proof}[1][Proof]{\textbf{#1.} }{\ \rule{0.5em}{0.5em}}
\begin{document}

\title{On the self-adjoint differential operator with the periodic matrix coefficients}
\author{O. A. Veliev\\{\small Dogus University, Esenkent 34775,\ Istanbul, Turkey.}\\\ {\small e-mail: oveliev@dogus.edu.tr}}
\date{}
\maketitle

\begin{abstract}
In this paper we consider the spectrum of the self-adjoint differential
operator $L$ generated \ by the differential expression of order $n$ with the
$m\times m$ periodic matrix coefficients, where $n$ and $m$ are respectively
odd and even integers and $n>1.$ We prove that the number of gaps in the
spectrum of $L$ is finite and find explicit estimation in term of coefficients
for the number of the gaps. Moreover, we find a condition on the norms of the
coefficients for which the spectrum is $(-\infty,\infty)$. Besides we
investigate the bands of the spectrum and prove that most of the real axis is
overlapped by $m$ bands.

Key Words: Self-adjoint differential operator, Spectral bands, Periodic matrix potential.

AMS Mathematics Subject Classification: 34L05, 34L20.

\end{abstract}

\section{Introduction and Preliminary Facts}

Let $L$ be the differential operator generated in the space $L_{2}^{m}%
(-\infty,\infty)$ of the vector-valued functions by formally self-adjoint
differential expression
\begin{equation}
(-i)^{n}y^{(n)}(x)+%
%TCIMACRO{\tsum \limits_{v=2}^{n}}%
%BeginExpansion
{\textstyle\sum\limits_{v=2}^{n}}
%EndExpansion
P_{v}(x)y^{(n-v)}(x), \tag{1}%
\end{equation}
where $n$ is an odd integer and $n>1,$ $P_{v}\left(  x\right)  $ for
$v=2,3,...n$ are the $m\times m$ matrices with the $1$-periodic locally square
summable entries and $m$ is an even integer. It is well-known that (see [1, 2,
8, 10]) the spectrum $\sigma(L)$ of the operator $L$ is the union of the
spectra of the operators $L_{t}$ for $t\in(-1,1]$ generated in $L_{2}%
^{m}\left[  0,1\right]  $ by the expression (1) and the quasiperiodic
conditions
\begin{equation}
U_{\mathbb{\nu}}(y):=y^{(\mathbb{\nu})}\left(  1\right)  -e^{i\pi
t}y^{(\mathbb{\nu})}\left(  0\right)  =0,\text{ }\mathbb{\nu}=0,1,...,(n-1).
\tag{2}%
\end{equation}
Note that $L_{2}^{m}\left[  0,1\right]  $ is the set of vector-valued
functions $f=\left(  f_{1},f_{2},...,f_{m}\right)  $ with $f_{k}\in
L_{2}\left[  0,1\right]  $ for $k=1,2,...,m.$ The norm $\left\Vert
\cdot\right\Vert $ and inner product $(\cdot,\cdot)$ in $L_{2}^{m}\left[
0,1\right]  $ are defined by%
\[
\left\Vert f\right\Vert ^{2}=\int_{0}^{1}\left\vert f\left(  x\right)
\right\vert ^{2}dx,\text{ }(f,g)=\int_{0}^{1}\left\langle f\left(  x\right)
,g\left(  x\right)  \right\rangle dx,
\]
where $\left\vert \cdot\right\vert $ and $\left\langle \cdot,\cdot
\right\rangle $ are the norm and inner product in $\mathbb{C}^{m}.$ The
operators $L$ and $L_{t}$ are denoted by $L(0)$ and $L_{t}(0)$ if $P_{v}$ for
$v=2,3,...n$ are the zero matrices.

Let us discuss some preliminary results and describe the results of this
paper. Clearly, $\left(  2\pi k+\pi t\right)  ^{n}$ for $k\in\mathbb{Z}$ is
the eigenvalue of $L_{t}(0)$ with multiplicity $m$ and the corresponding
eigenspace is%
\begin{equation}
E_{k,t}=\left\{  ue^{i\pi\left(  2k+t\right)  x}:u\in\mathbb{C}^{m}\right\}  .
\tag{3}%
\end{equation}
The set $\left\{  \left(  2\pi k+\pi t\right)  ^{n}:t\in(-1,1],\text{ }%
k\in\mathbb{Z}\right\}  $ of the Bloch (generalized) eigenvalues of $L(0)$
overlap $m$ times (counting the multiplicity) the real axis.

First of all note that the case when $n$ is an even number $(n=2v)$
essentially differ from the case when $n$ is an odd number $(n=2v+1)$ in the
scalar $\left(  m=1\right)  $ and vectorial cases $\left(  m>1\right)  $. For
example, in the case $n=2$ and $m=1$ the finite zone potentials of the
Schrodinger operator are infinitely differentiable functions and have a
special form expressed by Riemann $\theta$ function (see [5, Chap. 8 and 9],
[6, Chap. 4] [3, 7]), while in [8] it was proved that if $n=2v+1$ and $m=1,$
then the spectrum of $L$ is $(-\infty,\infty)$. Moreover, this proof is very
simple and passes through for the case when both $n$ and $m$ are the odd
numbers (see [12]), due to that $nm$ is an odd number. However, this proof
does not walk for the case $n=2v+1$ and $m=2s,$ since $nm$ is an even number.
In [12], for this case, we proved that if the matrix $C=\int_{0}^{1}%
P_{2}\left(  x\right)  dx$ has at least one simple eigenvalue, then the number
of the gaps in $\sigma(L)$ is finite. Here we prove this statement without any
condition on $C$ and on the coefficients of (1). Besides, in this paper we
obtain the following additional results for the case $n=2v+1$ and $m=2s.$

1. We give an explicit estimation for the number of gaps in term of
\begin{equation}
M=%
%TCIMACRO{\tsum \limits_{v=2}^{n}}%
%BeginExpansion
{\textstyle\sum\limits_{v=2}^{n}}
%EndExpansion
\sup_{u}\left\Vert P_{v}u\right\Vert \leq%
%TCIMACRO{\tsum \limits_{v=2}^{n}}%
%BeginExpansion
{\textstyle\sum\limits_{v=2}^{n}}
%EndExpansion
\left(
%TCIMACRO{\tsum \limits_{i,j=1}^{m}}%
%BeginExpansion
{\textstyle\sum\limits_{i,j=1}^{m}}
%EndExpansion
\left\Vert p_{v,i,j}\right\Vert \right)  , \tag{4}%
\end{equation}
where $\sup$ is taken for $u\in\mathbb{C}^{m}$ and $\left\vert u\right\vert
=1$ and $p_{v,i,j}$ is the entry of $P_{v}.$ Namely, we prove that the number
of the gaps in $\sigma(L)$ is not greater than $m(2N-1)-1,$ where $N$ is the
smallest integer satisfying $N\geq\pi^{-2}M+1$.

2. We prove that if $M\leq\ \pi^{2}2^{-n+1/2}$, then $\sigma(L)=(-\infty
,\infty).$

3. We give a definition of the bands and band functions and prove that the
defined band functions continuously depend on $t\in(-1,1].$ Note that in the
case $n=2v$, $L_{t}$ is below-bounded self-adjoint operator. Therefore the
eigenvalues of $L_{t}$ can be numerated in the nondecreasing order
$\lambda_{1}(t)\leq\lambda_{2}(t)\leq\cdot\cdot\cdot$ and the function
$\lambda_{n}:(-1,1]\rightarrow\mathbb{R}$ is said to be the $n$-th band
function of $L$. This numeration and definition of the band function can not
be used for the odd $n,$ since then $L_{t}$ is not semibounded operator.

4. We prove that most of the real axis is overlapped by $m$ bands.

Finally note that, the method used in this paper is absolutely different from
the methods used in [11] and [12] for the cases $n=2$ and $n\geq2$ respectively.

\section{On the localization of the Bloch eigenvalues}

In this section we consider the localization of the eigenvalues of $L_{t}$.
Namely, we prove that if $\left\vert k\right\vert \geq\pi^{-2}M+1,$ where $M$
is defined in (4), then the $\delta_{k}(t)$ neighborhood
\begin{equation}
B(k,t)=\left(  (2\pi k+\pi t)^{n}-\delta_{k}(t),(2\pi k+\pi t)^{n}+\delta
_{k}(t)\right)  \tag{5}%
\end{equation}
of the eigenvalue $(2\pi k+\pi t)^{n}$ of $L_{t}(0)$ contains only $m$
eigenvalues of $L_{t},$ where $\delta_{k}(t)=\frac{3}{2}\pi^{n-2}M\left\vert
(2k+t)\right\vert ^{n-2}$.

First let us describe briefly the scheme of the proof of this statement. We
use the following notations and formulas. Let $\lambda(k,t,\varepsilon)$ be
the eigenvalue of the operator $L_{t,\varepsilon}=L_{t}(0)+\varepsilon
(L_{t}-L_{t}(0))$ satisfying
\begin{equation}
\lambda(k,t,\varepsilon)\in I(k,t) \tag{6}%
\end{equation}
and $\Psi_{\lambda(k,t,\varepsilon)}$ be a normalized eigenfunction of
$L_{t,}$ corresponding to the eigenvalue $\lambda(k,t,\varepsilon),$ where
$\varepsilon\in\lbrack0,1]$ and $I(k,t)=[(2\pi k+\pi t-\pi)^{n},(2\pi k+\pi
t+\pi)^{n}).$ Sometimes, for brevity, instead of $\Psi_{\lambda
(k,t,\varepsilon)}$ and $\lambda(k,t,\varepsilon)$ we write $\Psi_{\lambda}$
and $\lambda.$ The eigenfunction $\Psi_{\lambda(k,t,\varepsilon)}$ has a
decomposition
\begin{equation}
\Psi_{\lambda(k,t,\varepsilon)}=\sum\limits_{p\in\mathbb{Z}}\left(
\Psi_{\lambda(k,t,\varepsilon)},\varphi_{p,t}\right)  \varphi_{p,t}, \tag{7}%
\end{equation}
where $\varphi_{p,t}\in E_{p,t},$ $\left\Vert \varphi_{p,t}\right\Vert =1,$
$\varphi_{p,t}=u_{p}e^{i\left(  2\pi p+\pi t\right)  x},$ $E_{p,t}$ is defined
in (3) $u_{p}\in\mathbb{C}^{m}$ and $\left\vert u_{p}\right\vert =1.$
Multiplying the equation $L_{t,\varepsilon}\Psi_{\lambda}=\lambda\Psi
_{\lambda}$ by $\varphi_{k,t}$ and taking into account that (1) is a
self-adjoint differential expression we get
\begin{equation}
\left(  \lambda-\left(  2\pi k+\pi t\right)  ^{n}\right)  \left(
\Psi_{\lambda},\varphi_{k,t}\right)  =\varepsilon\sum\limits_{\nu=2}%
^{n}\left(  i2\pi k+i\pi t\right)  ^{n-v}(\Psi_{\lambda}^{{}},P_{\nu}%
\varphi_{k,t}), \tag{8}%
\end{equation}
where $\lambda=\lambda(k,t,\varepsilon).$ We use this formula as follows.
First note that we have the following estimations for the right hand side of
(8). If $k\in\mathbb{Z}\backslash\left\{  0\right\}  ,$ then
\begin{equation}
\left\vert \varepsilon\sum\limits_{\nu=2}^{n}\left(  i2\pi k+i\pi t\right)
^{n-v}(\Psi_{\lambda}^{{}},P_{\nu}\varphi_{k,t})\right\vert \leq\left(
\left\vert 2k+t\right\vert \pi\right)  ^{n-2}M \tag{9}%
\end{equation}
for all $\varepsilon\in\lbrack0,1]$. In case $k=0$ we have
\begin{equation}
\left\vert \varepsilon\sum\limits_{\nu=2\pi}^{n}\left(  i\pi t\right)
^{n-v}(\Psi_{\lambda}^{{}},P_{\nu}\varphi_{0,t})\right\vert \leq\pi^{n-2}M
\tag{10}%
\end{equation}
for all $\varepsilon\in\lbrack0,1]$ and $t\in(-1,1].$ Moreover, we prove that
\begin{equation}
\left\vert \left(  \Psi_{\lambda(k,t,\varepsilon)},\varphi_{k,t}\right)
\right\vert >\frac{2}{3}. \tag{11}%
\end{equation}
Then using (9)-(11) in (8) we obtain that the eigenvalue $\lambda
(k,t,\varepsilon)$ satisfying (6) is contained in $B(k,t)$ if $\left\vert
k\right\vert \geq\pi^{-2}M+1$ (see Theorem 1). To obtain (11) we use the
obvious equality
\begin{equation}
1=\sum\limits_{p\in\mathbb{Z}}\left\vert \left(  \Psi_{\lambda(k,t,\varepsilon
)},\varphi_{p,t}\right)  \right\vert ^{2} \tag{12}%
\end{equation}
(see (7)) and prove the inequalities
\begin{equation}
\sum_{p>k}\left\vert \left(  \Psi_{\lambda(k,t,\varepsilon)},\varphi
_{p,t}\right)  \right\vert ^{2}<\frac{5}{64}, \tag{13}%
\end{equation}%
\begin{equation}
\sum_{p<-k}\left\vert \left(  \Psi_{\lambda(k,t,\varepsilon)},\varphi
_{p,t}\right)  \right\vert ^{2}<\frac{1}{48} \tag{14}%
\end{equation}
and%
\begin{equation}
\sum_{-k\leq p<k}\left\vert \left(  \Psi_{\lambda(k,t,\varepsilon)}%
,\varphi_{p,t}\right)  \right\vert ^{2}<\frac{27}{64}. \tag{15}%
\end{equation}
The estimations (13), (14) and (15) are proved in Lemma 1 and Lemma 2 respectively.

The case $M\leq\ \pi^{2}2^{-n+1/2}$ is considered in Theorem 2 in the similar
way. In the estimations we use the following inequalities. If $ab>0$ and
$n\geq3,$ then
\begin{equation}
\left\vert \frac{a^{n-2}}{a^{n}-b^{n}}\right\vert <\left\vert \frac
{1}{(a-b)(a+b)}\right\vert , \tag{16}%
\end{equation}
where $n$ is an odd number. If $l\geq2$ and $s\geq p,$ then
\begin{equation}
\sum_{j=p}^{\infty}\frac{1}{(2j+1)^{l}}<\sum_{j=p}^{s-1}\frac{1}{(2p+1)^{l}%
}+\frac{1}{2^{l}}\int_{s}^{\infty}\frac{1}{x^{l}}dx. \tag{17}%
\end{equation}

\begin{lemma}
\bigskip If $\left\vert k\right\vert \geq\pi^{-2}M,$ then (13) and (14) hold.
\end{lemma}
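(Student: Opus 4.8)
The plan is to start from the fundamental equation (8) and use it to control the "off-diagonal" Fourier coefficients $(\Psi_\lambda,\varphi_{p,t})$ for $p\neq k$. The key identity is the analogue of (8) with $k$ replaced by an arbitrary index $p$: multiplying $L_{t,\varepsilon}\Psi_\lambda=\lambda\Psi_\lambda$ by $\varphi_{p,t}$ gives
\begin{equation}
\left(\lambda-(2\pi p+\pi t)^n\right)(\Psi_\lambda,\varphi_{p,t})=\varepsilon\sum_{v=2}^n\left(i2\pi p+i\pi t\right)^{n-v}(\Psi_\lambda,P_v\varphi_{p,t}).\tag{$8_p$}
\end{equation}
The right-hand side of $(8_p)$ is bounded exactly as in (9)–(10), namely by $(|2p+t|\pi)^{n-2}M$ for $p\neq 0$, since $|(\Psi_\lambda,P_v\varphi_{p,t})|\leq\|P_v\varphi_{p,t}\|\leq\sup_u\|P_vu\|$ and the sum of these over $v$ is $M$. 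Because $\lambda=\lambda(k,t,\varepsilon)$ satisfies (6), i.e. $\lambda\in I(k,t)=[(2\pi k+\pi t-\pi)^n,(2\pi k+\pi t+\pi)^n)$, the factor $\lambda-(2\pi p+\pi t)^n$ is bounded below in absolute value by the distance from the interval $I(k,t)$ to the point $(2\pi p+\pi t)^n$, and this distance grows as $p$ moves away from $k$. Dividing in $(8_p)$ therefore yields
\begin{equation}
\left|(\Psi_\lambda,\varphi_{p,t})\right|\leq\frac{(|2p+t|\pi)^{n-2}M}{\left|\lambda-(2\pi p+\pi t)^n\right|},\tag{$*$}
\end{equation}
and the whole game is to sum the squares of these bounds over $p>k$ and over $p<-k$ and show they fall below $5/64$ and $1/48$ respectively.

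The next step is to estimate the denominator in $(*)$ from below. For $p>k>0$ the point $(2\pi p+\pi t)^n$ lies to the right of $I(k,t)$, so $\left|\lambda-(2\pi p+\pi t)^n\right|\geq(2\pi p+\pi t)^n-(2\pi k+\pi t+\pi)^n=\pi^n\bigl((2p+t)^n-(2k+t+1)^n\bigr)$; similarly for $p<-k$ the relevant lower bound comes from the left endpoint of $I(k,t)$. Writing $a=|2p+t|\pi$ and $b$ for the appropriate endpoint value, each summand of $(*)^2$ has the shape $a^{2(n-2)}M^2/(a^n-b^n)^2$, and here I would invoke inequality (16): since $ab>0$ and $n$ is odd with $n\geq3$, one has $|a^{n-2}/(a^n-b^n)|<|1/((a-b)(a+b))|$. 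This collapses the exponent $n$ down to a manageable quadratic-type denominator, so that each squared term is bounded by a constant times $M^2/((a-b)(a+b))^2$, which after factoring out $\pi$'s behaves like $M^2\pi^{-4}$ divided by something quadratic in $p$. The hypothesis $|k|\geq\pi^{-2}M$, equivalently $M\leq\pi^2|k|$, is exactly what is needed to turn $M^2\pi^{-4}$ into a bound involving $|k|^2$, making the resulting series in $p$ convergent and summable to an explicit constant.

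The final step is the summation of these tail series, which is where inequality (17) enters: after the reduction, the sum over $p>k$ reduces to a series of the form $\sum 1/(\text{gap in }p)^2$ (gaps measured from $k$), and (17) provides the clean majorant splitting the sum into a short finite head plus an integral tail $\tfrac{1}{2^l}\int_s^\infty x^{-l}\,dx$. Carrying this out and plugging in the constant $M\leq\pi^2|k|$ should produce the numerical bounds $5/64$ for $\sum_{p>k}$ and the smaller $1/48$ for $\sum_{p<-k}$ — the asymmetry being a consequence of the point $(2\pi p+\pi t)^n$ for $p<-k$ being strictly farther from $I(k,t)$ than its mirror image for $p>k$, since $n$ is odd and the cube-like map $x\mapsto x^n$ spreads negative arguments away from the positively-centered interval $I(k,t)$. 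I expect the main obstacle to be this bookkeeping of the denominator lower bounds uniformly in $t\in(-1,1]$ and in $\varepsilon\in[0,1]$: one must verify that the worst case of $\lambda$ inside $I(k,t)$ still leaves enough separation from every $(2\pi p+\pi t)^n$, and that the constant extracted from (16) and (17) genuinely lands below the stated fractions rather than merely $O(1/k)$. The clean use of (16) to kill the odd power $n$ is the decisive trick that keeps the estimate uniform in $n\geq3$.
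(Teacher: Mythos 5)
Your overall strategy --- the analogue of (8) with $p$ in place of $k$, the bound $(|2p+t|\pi)^{n-2}M$ on its right-hand side, the lower bound on $|\lambda-(2\pi p+\pi t)^{n}|$ coming from the endpoints of $I(k,t)$, and summation via (16)--(17) combined with $M\leq\pi^{2}|k|$ --- is exactly the paper's route, and for the sum over $p>k$ (inequality (13)) it works as you describe. The genuine gap is in the case $p<-k$, i.e.\ inequality (14). There you set $a=|2p+t|\pi$, let $b$ be the left endpoint value, claim each summand has the shape $a^{2(n-2)}M^{2}/(a^{n}-b^{n})^{2}$, and invoke (16). But for $p<-k$ and $k>0$ the number $(2\pi p+\pi t)^{n}$ is \emph{negative} ($n$ odd), so the true denominator is $\bigl((2\pi k+\pi t-\pi)^{n}-(2\pi p+\pi t)^{n}\bigr)^{2}=(b^{n}+a^{n})^{2}$, a \emph{sum} of positive $n$-th powers, not a difference. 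Inequality (16) does not apply: with the signed values one has $ab<0$, and with absolute values the difference structure is gone. One can of course weaken $a^{n}+b^{n}\geq|a^{n}-b^{n}|$ and then use (16), but that throws away precisely the advantage of the negative side: the bound so obtained for the single term $p=-k-1$ is already of size about $k^{2}/(4k+1)^{2}\approx 1/16>1/48$, and the total comes out near $5/64$, not $1/48$. This loss is not cosmetic, because the lemma feeds into (11) through $1-\tfrac{5}{64}-\tfrac{1}{48}-\tfrac{27}{64}>\tfrac{4}{9}$; replacing $1/48$ by $5/64$ gives $1-\tfrac{5}{64}-\tfrac{5}{64}-\tfrac{27}{64}=\tfrac{27}{64}<\tfrac{4}{9}$, so (11) and everything downstream would collapse.

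The correct (and simpler) argument for $p<-k$, which is what the paper does, exploits the sum structure directly: since $(2k-1+t)^{n}>0$ and $-(2p+t)^{n}=|2p+t|^{n}>0$, the denominator is at least $|2p+t|^{n}$, hence each squared coefficient is at most $\pi^{-4}M^{2}|2p+t|^{2n-4}/|2p+t|^{2n}=\pi^{-4}M^{2}/(2p+t)^{4}$. This quartic decay, summed via (17) with $l=4$ (the paper's (26)), gives $\sum_{p<-k}|(\Psi_{\lambda},\varphi_{p,t})|^{2}\leq\pi^{-4}M^{2}/(48k^{3})\leq 1/48$, using $\pi^{-2}M\leq k$ and $k\geq1$. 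Your intuition that the negative side is ``strictly farther'' is exactly right, but (16) is the wrong tool to cash it in; you need the positivity of both terms in the denominator. Separately, your write-up leaves the numerical verifications (the factor $(2p+2k-1)^{2}>16k^{2}$, the head-plus-tail estimate $<5/4$ from (17) with $l=2$, $s=5$) as expectations rather than carrying them out, so even the $p>k$ half is a plan rather than a proof; but the only conceptual error is the treatment of $p<-k$.
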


\begin{proof}
We consider the case $k>0.$ The case $k<0$ can be considered in the same way.
It follows from (6) that
\[
\left\vert \lambda(k,t,\varepsilon)-(2\pi p+\pi t)^{n}\right\vert >\left\vert
(2\pi k+\pi t+\pi)^{n}-(2\pi p+\pi t)^{n}\right\vert
\]
for $p>k$ and
\[
\left\vert \lambda(k,t,\varepsilon)-(2\pi p+\pi t)^{n}\right\vert
\geq\left\vert (2\pi k+\pi t-\pi)^{n}-(2\pi p+\pi t)^{n}\right\vert
\]
for $p<k$. Using these inequalities and the relations obtained from (8) and
(9) by replacing $k$ with $p\neq0$ we obtain
\begin{equation}
\left\vert \left(  \Psi_{\lambda(k,t,\varepsilon)},\varphi_{p,t}\right)
\right\vert ^{2}<\frac{\pi^{-4}M^{2}\left(  \left\vert 2p+t\right\vert
^{n-2}\right)  ^{2}}{\left(  (2k+t+1)^{n}-(2p+t)^{n}\right)  ^{2}} \tag{18}%
\end{equation}
for $p>k$ and
\begin{equation}
\left\vert \left(  \Psi_{\lambda(k,t,\varepsilon)},\varphi_{p,t}\right)
\right\vert ^{2}\leq\frac{\pi^{-4}M^{2}\left(  \left\vert 2p+t\right\vert
^{n-2}\right)  ^{2}}{\left(  (2k+t-1)^{n}-(2p+t)^{n}\right)  ^{2}} \tag{19}%
\end{equation}
for $p<k.$ In case $p=0,$ instead of (9) using (10) we get the formulas
\begin{equation}
\left\vert \left(  \Psi_{\lambda(k,t,\varepsilon)},\varphi_{0,t}\right)
\right\vert ^{2}<\frac{\pi^{-4}M^{2}}{\left(  (2k+t+1)^{n}-t^{n}\right)  ^{2}}
\tag{20}%
\end{equation}
for $k<0$ and
\begin{equation}
\left\vert \left(  \Psi_{\lambda(k,t,\varepsilon)},\varphi_{0,t}\right)
\right\vert ^{2}\leq\frac{\pi^{-4}M^{2}}{\left(  (2k+t-1)^{n}-t^{n}\right)
^{2}} \tag{21}%
\end{equation}
for $k>0$ instead of (18) and (19).

First we prove (13). Using (16), where $a=(2p+t)^{n}$ and $b=(2k+1+t),$ and
the relation $t\in(-1,1]$ in (18) we obtain
\begin{equation}
\sum_{p>k}\left\vert \left(  \Psi_{\lambda},\varphi_{p,t}\right)  \right\vert
^{2}<\sum_{p>k}\frac{\pi^{-4}M^{2}}{(2p-2k-1)^{2}(2p+2k-1)^{2}} \tag{22}%
\end{equation}
for all $t\in(-1,1].$ Now we estimate the right side of (22) as follows. Since
$p>k\geq1$ we have $(2p+2k-1)^{2}>16k^{2}$. On the other hand, using (17) for
$l=2,$ $p=0$ and $s=5$ by direct calculations one can easily verify that
\begin{equation}
\sum_{p>k}\frac{1}{(2p-2k-1)^{2}}<\sum_{j=0}^{4}\frac{1}{(2j+1)^{2}}+\frac
{1}{4}\int_{5}^{\infty}\frac{1}{x^{2}}dx<\frac{5}{4}. \tag{23}%
\end{equation}
Using these inequalities in (22) we obtain
\begin{equation}
\sum_{p>k}\left\vert \left(  \Psi_{\lambda(k,t,\varepsilon)},\varphi
_{p,t}\right)  \right\vert ^{2}<\frac{5\pi^{-4}M^{2}}{64k^{2}}. \tag{24}%
\end{equation}
Now (13) follows from (24), since $\left\vert k\right\vert \geq\pi^{-2}M$.

Now we prove (14). If $p<-k,$ where $k\geq1,$ then both $(2k-1+t)^{n}$ and
$-(2p+t)^{n}$ are the positive numbers. Therefore it follows from (19) that
\begin{equation}
\sum_{p<-k}\left\vert \left(  \Psi_{\lambda(k,t,\varepsilon)},\varphi
_{p,t}\right)  \right\vert ^{2}\leq\sum_{p<-k}\frac{\pi^{-4}M^{2}}{(2p+t)^{4}%
}\leq\sum_{p<-k}\frac{\pi^{-4}M^{2}}{(2p+1)^{4}} \tag{25}%
\end{equation}
for all $t\in(-1,1].$ Now using the obvious inequality
\begin{equation}
\sum_{j=k}^{\infty}\frac{1}{(2j+1)^{4}}\leq\frac{1}{16}\int_{k}^{\infty}%
\frac{1}{x^{4}}dx=\frac{1}{48\left\vert k\right\vert ^{3}} \tag{26}%
\end{equation}
(see (17) for $l=4$ and $p=s=k$ ) we get
\begin{equation}
\sum_{p<-k}\left\vert \left(  \Psi_{\lambda(k,t,\varepsilon)},\varphi
_{p,t}\right)  \right\vert ^{2}\leq\frac{\pi^{-4}M^{2}}{48\left\vert
k\right\vert ^{3}}. \tag{27}%
\end{equation}
Thus (14) follows from (27), since $k\geq\pi^{-2}M$ and $k\geq1.$
\end{proof}

\begin{lemma}
\bigskip If$\ \left\vert k\right\vert \geq\pi^{-2}M+1,$ then (15) holds.
\end{lemma}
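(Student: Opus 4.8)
The plan is to fix the sign of $k$, taking $k>0$ (the case $k<0$ being symmetric, exactly as in Lemma 1), and to break the finite sum in (15) into the three blocks $1\le p\le k-1$, $p=0$ and $-k\le p\le -1$. The hypothesis is used only through the single inequality $\pi^{-2}M\le k-1$, i.e. $\pi^{-4}M^{2}\le(k-1)^{2}$, which is precisely what the extra unit in $\left\vert k\right\vert\ge\pi^{-2}M+1$ provides and which Lemma 1 (where only $\left\vert k\right\vert\ge\pi^{-2}M$ is assumed) does not. Note also that $k=1$ forces $M=0$, whence $\Psi_{\lambda}=\varphi_{k,t}$ and (15) is trivial; so I may assume $k\ge 2$.

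For the dominant block $1\le p\le k-1$ I would substitute (19) and apply (16) with $a=2p+t>0$ and $b=2k-1+t>0$ (so $ab>0$). Squaring gives
\[
\left\vert\left(\Psi_{\lambda},\varphi_{p,t}\right)\right\vert^{2}<\frac{\pi^{-4}M^{2}}{\left(2(k-p)-1\right)^{2}\left(2(k+p)-1+2t\right)^{2}}.
\]
As $p\ge 1$ and $t>-1$ force $2(k+p)-1+2t>2k-1$, summing the first factor against $\sum_{q\ge1}(2q-1)^{-2}=\pi^{2}/8$ bounds this block by $\frac{(k-1)^{2}}{(2k-1)^{2}}\cdot\frac{\pi^{2}}{8}<\frac{\pi^{2}}{32}$, using $\pi^{-4}M^{2}\le(k-1)^{2}$ and $(k-1)/(2k-1)<1/2$. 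The factor $(k-1)^{2}/(2k-1)^{2}<1/4$ is exactly what the extra unit buys: with only $\pi^{-4}M^{2}\le k^{2}$ one gets $k^{2}/(2k-1)^{2}>1/4$ and the argument fails to close.

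For $p=0$ I would use (21) and the bound $(2k-1+t)^{n}-t^{n}>(2k-2)^{n}\ge 2^{n}$, valid for $k\ge 2$, $t\in(-1,1]$ (superadditivity of $x\mapsto x^{n}$ if $t\ge 0$; $2k-1+t>2k-2$ together with $-t^{n}>0$ if $t<0$), giving a contribution at most $\pi^{-4}M^{2}2^{-2n}\le 2^{-2n}\le 1/64$. For $-k\le p\le -1$ the denominator in (19) turns into the \emph{sum} $(2k-1+t)^{n}+\left\vert 2p+t\right\vert^{n}$, since $n$ is odd, so (16) no longer applies; instead I would use $\bigl((2k-1+t)^{n}+\left\vert 2p+t\right\vert^{n}\bigr)^{2}\ge\max\{(2k-1+t)^{2n},\left\vert 2p+t\right\vert^{2n}\}$, which in both regimes makes each term at most $\pi^{-4}M^{2}(2k-1+t)^{-4}$. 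Summing the $k$ such terms and using $\pi^{-4}M^{2}\le(k-1)^{2}$, $2k-1+t>2k-2$ bounds the block by $k/\bigl(16(k-1)^{2}\bigr)$, i.e. by $1/8$ at $k=2$ and decreasing afterward. Keeping the large term $(2k-1+t)^{n}$ is the key: the cruder Lemma 1 estimate, which drops it, would give a bound growing with $k$ over $\left\vert p\right\vert\le k$.

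The final step is to add the three blocks and confirm the total stays under $27/64$, and this constant bookkeeping is the one genuinely delicate point. The difficulty is that the dominant block increases with $k$ toward its supremum $\pi^{2}/32\approx 0.31$ while the $p=0$ and negative blocks decrease in $k$, so their maxima sit at opposite ends and one cannot merely add worst cases (that overshoots at $k=2$). A direct check of the combined sum as a function of $k$ — it is about $0.28$ at $k=2$ and tends to $\pi^{2}/32<27/64\approx 0.42$ as $k\to\infty$, staying below $27/64$ at every integer in between — yields (15).
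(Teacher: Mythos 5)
Your decomposition is essentially the paper's own: the same three blocks $p\in(0,k)$, $p=0$, $p\in[-k,0)$, the same use of (16) on the positive block via (19), the bound (21) at $p=0$, and the same ``both summands are positive'' trick on the negative block (the paper splits off $p=-k$ as a fourth case, but estimates it identically). The differences are cosmetic: you apply (16) with $a=2p+t$ where the paper takes $a=2k-1+t$, you use the exact value $\sum_{q\ge1}(2q-1)^{-2}=\pi^{2}/8$ where the paper uses the cruder $5/4$ from (23), and you absorb $p=-k$ into the negative block. Your final bookkeeping, asserted as ``a direct check,'' is also correct once substantiated: at $k=2$ the three blocks total $\pi^{2}/72+1/8+1/64\approx0.28$, and for $k\ge3$ the negative block is at most $3/64$ and the $p=0$ block is negligible, so the total stays below $\pi^{2}/32+3/64+2^{-10}<27/64$.

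There is, however, one step that fails as written: in the $p=0$ block you pass from $(2k-1+t)^{n}-t^{n}>(2k-2)^{n}\ge2^{n}$ to the bound $\pi^{-4}M^{2}2^{-2n}\le2^{-2n}$, and this last inequality requires $\pi^{-4}M^{2}\le1$, i.e.\ $M\le\pi^{2}$, which is not part of the hypothesis. The assumption $|k|\ge\pi^{-2}M+1$ only couples $M$ to $k$; $M$ itself can be arbitrarily large (with $k$ correspondingly large), and in that regime your bound, honestly tracked, becomes $\pi^{-4}M^{2}2^{-2n}\le(k-1)^{2}2^{-2n}$, which grows with $k$ and destroys the estimate. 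The error is that you collapsed $(2k-2)^{2n}$ to its minimum $2^{2n}$ \emph{before} invoking $\pi^{-4}M^{2}\le(k-1)^{2}$, so nothing is left to cancel the $M^{2}$. The repair is immediate and uses only tools you already stated: keep the $k$-dependence and write
\[
\frac{\pi^{-4}M^{2}}{(2k-2)^{2n}}\le\frac{(k-1)^{2}}{2^{2n}(k-1)^{2n}}=\frac{1}{2^{2n}(k-1)^{2n-2}}\le\frac{1}{64},
\]
which is in substance what the paper does (it first bounds $(2k-2)^{2n}\ge16(2k-2)^{2}$ and only then uses $2k-2\ge2\pi^{-2}M$). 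With that correction your argument closes, and the constants above confirm the total is comfortably under $27/64$.
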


\begin{proof}
We consider (15) for the cases $p\in($\ $0,k),$ $p=0$, $p\in(-k,0)$ and $p=-k$
separately, where $k\geq2.$ The case $k\leq-2$ can be considered in the same
way. If $p\in(0,k)$ then $(2p+t)\geq0.$ Hence using (16) for $a=2k-1+t$ and
$b=2p+t$ and the relations $t\in(-1,1]$, $p\geq1$ in (19) we obtain
\begin{equation}
\sum_{p\in(0,k)}\left\vert \left(  \Psi_{\lambda(k,t,\varepsilon)}%
,\varphi_{p,t}\right)  \right\vert ^{2}\leq\sum_{p\in(0,k)}\frac{\pi^{-4}%
M^{2}}{(2k-2p-1)^{2}(2k-1)^{2}}. \tag{28}%
\end{equation}
Now using (23) and the inequality $2k-1>2\pi^{-2}M$ we obtain
\begin{equation}
\sum_{p\in(0,k)}\left\vert \left(  \Psi_{\lambda(k,t,\varepsilon)}%
,\varphi_{p,t}\right)  \right\vert ^{2}<\frac{5}{4}\frac{\pi^{-4}M^{2}%
}{(2k-1)^{2}}<\frac{5}{16}. \tag{29}%
\end{equation}
\ 

Now consider the case $p=0.$ One can easily verify that the function $f$
defined by $f(t)=(2k-1+t)^{n}-t^{n}$ gets its minimum value on $[-1,1]$ at
$t=-1.$ Therefore we have $(2k-1+t)^{n}-t^{n}>(2k-2)^{n}$ for all
$t\in(-1,1].$ Thus using this inequality and the inequalities
\begin{equation}
2k-2\geq2\pi^{-2}M,\text{ }2k-2\geq2 \tag{30}%
\end{equation}
and $n\geq3$ in (21) we obtain
\begin{equation}
\left\vert \left(  \Psi_{\lambda(k,t,\varepsilon)},\varphi_{0,t}\right)
\right\vert ^{2}\leq\frac{\pi^{-4}M^{2}}{(2k-2)^{2n}}\leq\frac{\pi^{-4}M^{2}%
}{16(2k-2)^{2}}\leq\frac{1}{64}. \tag{31}%
\end{equation}

If $p\in(-k,0),$ then both $(2k-1+t)^{n}$ and $-(2p+t)^{n}$ are the positive
numbers. Therefore in the both cases $(2k-1+t)^{n}$ $\geq$ $-(2p+t)^{n}$ and
$(2k-1+t)^{n}\leq-(2p+t)^{n}$ we have
\[
\frac{\left\vert 2p+t\right\vert ^{n-2}}{(2k-1+t)^{n}-(2p+t)^{n}}<\frac
{1}{(2k-1+t)^{2}}\leq\frac{1}{(2k-2)^{2}}%
\]
for all $t\in(-1,1].$ Using this, (19) and (30) we obtain
\begin{equation}
\sum_{p\in(-k,0)}\left\vert \left(  \Psi_{\lambda(k,t,\varepsilon)}%
,\varphi_{p,t}\right)  \right\vert ^{2}<\frac{\pi^{-4}\left(  k-1\right)
M^{2}}{(2k-2)^{4}}\leq\frac{1}{16}, \tag{32}%
\end{equation}
since the number of integer in $(-k,0)$ is $k-1.$

It remains to consider the case $p=-k.$ In this case using the obvious
inequality $(2k-1+t)^{n}-(-2k+t)^{n}\geq-(-2k+t)^{n}$ and (30) in (19) we
obtain
\[
\left\vert \left(  \Psi_{\lambda(k,t,\varepsilon)},\varphi_{-k,t}\right)
\right\vert ^{2}\leq\frac{\pi^{-4}M^{2}}{(2k-1)^{4}}\leq\frac{1}{36}%
\]
for all $t\in(-1,1].$ Now using this inequality, (29), (31) and (32) we obtain
(15) by direct calculations.
\end{proof}

Now we are ready to prove the following theorem about the localization of the
Bloch eigenvalues.

\begin{theorem}
Let $N$ be the smallest integer satisfying $N\geq\pi^{-2}M+1$ and
\[
A(N,t)=[(-2\pi N+\pi+\pi t)^{n},(2\pi N-\pi+\pi t)^{n}).
\]

$(a)$ The spectrum of $L_{t}$ is contained in the union of the intervals
$A(N,t)$ and $B(k,t)$ for $\left\vert k\right\vert \geq N$ , where $B(k,t)$ is
defined in (5).

$(b)$ The intervals $A(N,t)$ and $B(k,t)$ for $\left\vert k\right\vert \geq N$
are pairwise disjoint.

$(c)$ Each of the intervals $B(k,t)$ for $\left\vert k\right\vert \geq N$
contains only $m$ eigenvalues of $L_{t}$. The interval $A(N,t)$ contains only
$(2N-1)m$ eigenvalues of $L_{t}$.
\end{theorem}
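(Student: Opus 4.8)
The plan is to prove the three assertions by combining the per-mode decay estimates of Lemmas 1 and 2 with a continuity-and-counting argument in the coupling parameter $\varepsilon$. The core analytic input is already assembled: inequalities (13), (14) and (15) together with the normalization (12) force the "diagonal" coefficient to be large, namely $\left\vert\left(\Psi_{\lambda(k,t,\varepsilon)},\varphi_{k,t}\right)\right\vert^{2}>1-\tfrac{5}{64}-\tfrac{1}{48}-\tfrac{27}{64}$, which exceeds $\tfrac{4}{9}$ and hence yields (11). Feeding (11) and the right-hand estimates (9)--(10) into the key identity (8) bounds $\left\vert\lambda(k,t,\varepsilon)-(2\pi k+\pi t)^{n}\right\vert$ by $\tfrac{3}{2}(\pi\left\vert 2k+t\right\vert)^{n-2}M=\delta_{k}(t)$, so every eigenvalue of $L_{t,\varepsilon}$ lying in the window $I(k,t)$ actually lies in $B(k,t)$. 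This is the mechanism I would run for each $\left\vert k\right\vert\geq N$.

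For part $(a)$ I would first verify that the intervals $I(k,t)$ for $k\in\mathbb{Z}$ partition the real axis (they are consecutive half-open intervals whose endpoints are the midpoints between successive unperturbed eigenvalues $(2\pi k+\pi t)^{n}$, using that $x\mapsto x^{n}$ is increasing for odd $n$). Any real eigenvalue of $L_{t}=L_{t,1}$ therefore sits in exactly one $I(k,t)$. If $\left\vert k\right\vert\geq N$, the localization argument above places it in $B(k,t)\subset I(k,t)$. The remaining eigenvalues are those whose index satisfies $-N<k<N$, i.e. those lying in $\bigcup_{\left\vert k\right\vert<N}I(k,t)$; since these intervals are consecutive, their union is exactly the single interval $A(N,t)=[(-2\pi N+\pi+\pi t)^{n},(2\pi N-\pi+\pi t)^{n})$. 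Hence $\sigma(L_{t})$ is covered by $A(N,t)$ and the $B(k,t)$ with $\left\vert k\right\vert\geq N$.

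For part $(b)$ disjointness, I would argue two things: the $B(k,t)$ are mutually disjoint because $B(k,t)\subset I(k,t)$ and distinct $I(k,t)$ are disjoint (this needs checking that $\delta_{k}(t)$ is small enough that $B(k,t)$ does not spill outside $I(k,t)$, which follows from the factor $\tfrac{3}{2}$ versus the $O(\left\vert 2k+t\right\vert^{n-1})$ width of $I(k,t)$ for $n\geq3$); and $A(N,t)$ is disjoint from each $B(k,t)$ with $\left\vert k\right\vert\geq N$ because $A(N,t)=\bigcup_{\left\vert k\right\vert<N}I(k,t)$ abuts but does not overlap the neighbouring windows $I(\pm N,t)$. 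For part $(c)$, the counting of multiplicities is where I expect the real work, and here the continuity in $\varepsilon\in[0,1]$ is essential. At $\varepsilon=0$, the operator $L_{t}(0)$ has exactly $m$ eigenvalues (with multiplicity) in each $B(k,t)$ — namely the $m$-fold eigenvalue $(2\pi k+\pi t)^{n}$ — and exactly $(2N-1)m$ in $A(N,t)$. As $\varepsilon$ increases to $1$ the eigenvalues of $L_{t,\varepsilon}$ vary continuously; by the localization result no eigenvalue can cross the gaps separating $A(N,t)$ and the $B(k,t)$ from one another (those gaps lie in $I(k,t)\setminus B(k,t)$ and in the endpoints of $A(N,t)$, regions the estimate excludes), so the count inside each interval is preserved. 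I would therefore conclude that $B(k,t)$ retains exactly $m$ eigenvalues and $A(N,t)$ retains exactly $(2N-1)m$.

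The main obstacle is the rigorous justification that no eigenvalue leaks across an interval boundary as $\varepsilon$ runs over $[0,1]$; this requires knowing that the localization hypothesis ($\left\vert k\right\vert\geq N$, or membership of $I(k,t)$) holds uniformly in $\varepsilon$, which is exactly why Lemmas 1 and 2 and the estimates (9)--(11) are stated for all $\varepsilon\in[0,1]$ rather than just $\varepsilon=1$. The delicate point is to phrase the continuity-plus-conservation-of-count argument correctly for a non-semibounded self-adjoint family (one cannot simply invoke monotone ordering of eigenvalues as in the even-$n$ case), so I would rely on the fact that $L_{t,\varepsilon}$ has compact resolvent and discrete spectrum depending continuously on $\varepsilon$, together with the spectral-gap barrier provided by parts $(a)$ and $(b)$, to keep the eigenvalue count on each component of the covering constant along the homotopy.
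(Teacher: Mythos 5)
Your proposal is correct and follows essentially the same route as the paper: localization of each eigenvalue lying in $I(k,t)$ into $B(k,t)$ via Lemmas 1--2 and formulas (8)--(11), disjointness of the intervals via the inequality (16), and conservation of the eigenvalue count along the homotopy $L_{t,\varepsilon}$, $\varepsilon\in[0,1]$, starting from the explicit count for $L_{t}(0)$. The only difference is one of precision, not of method: where you appeal to ``compact resolvent and continuity of the discrete spectrum plus the gap barrier,'' the paper makes the no-leaking step rigorous by drawing circles $\Gamma(k,t)$ through the midpoints of the gaps $S(k,t)$ (which lie in the resolvent set of $L_{t,\varepsilon}$ for every $\varepsilon$) and invoking Kato's theory of holomorphic families [4, Chap.~7] to conclude that the number of eigenvalues inside each circle is independent of $\varepsilon$.
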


\begin{proof}
$(a)$ It follows from Lemma 1 and Lemma 2 that the eigenvalue $\lambda
(k,t,\varepsilon)$ of $L_{t}$ lying in the interval $I(k,t)$ is contained in
the interval $B(k,t),$ where $I(k,t)$ is defined in (6) (see the beginning of
this section). Since the intervals $A(N,t)$ and $I(k,t)$ for $\left\vert
k\right\vert \geq N$ is a cover of $(-\infty,\infty),$ all eigenvalues of
$L_{t}$ is contained in the union of the intervals $A(N,t)$ and $B(k,t)$ for
$\left\vert k\right\vert \geq N.$

$(b)$ First let us prove that the intervals $B(k,t)$ and $B(k+1,t)$ for $k\geq
N$ has no common point. In the other words we prove that the distance between
the centers of $B(k,t)$ and $B(k+1,t)$ is greater than $\delta_{k}%
(t)+\delta_{k+1}(t),$ where $B(k,t)$ and $\delta_{k}(t)$ are defined in (5).
It can be proved by using (16) as follows. The inequality (16) can be written
in the form $a^{n}-b^{n}>(a-b)a^{n-2}(a+b).$ Now instead of $a$ and $b$ taking
$(2\pi k+2\pi+\pi t)$ and $b=(2\pi k+\pi t)$ one can easily verify by direct
calculations that $(2k\pi+2\pi+\pi t)^{n}-(2\pi k+\pi t)^{n}>\delta
_{k}(t)+\delta_{k+1}(t)$ for $k\geq N$ and $t\in(-1,1].$ Thus $B(k,t)$ and
$B(k+1,t)$ for $\left\vert k\right\vert \geq N$ has no common point. In the
same way, we prove that $B(N,t)$ for $\left\vert k\right\vert \geq N$ has no
common point with the interval $A(N,t).$ The same holds for $k\leq-N.$

$(c)$ It follows from $(b)$ that the gaps
\begin{equation}
S(k,t):=\left[  (2\pi k+\pi t)^{n}+\delta_{k}(t),(2\pi k+2\pi+\pi
t)^{n}-\delta_{k+1}(t)\right]  \tag{33}%
\end{equation}
between $B(k,t)$ and $B(k+1,t)$ for $k\geq N$ belong to the resolvent set of
the operators $L_{t,\varepsilon}$ for all $\varepsilon\in\lbrack0,1].$ Let
$\Gamma(k,t)$ be the circle passing through the middle points of the gaps
$S(k,t)$ and $S(k+1,t).$ Then $\Gamma(k,t)$ enclose the interval $B(k+1,t)$
and lies in the resolvent set of the operators $L_{t,\varepsilon}$ for all
$\varepsilon\in\lbrack0,1].$ Therefore, taking into account that the family
$L_{t,\varepsilon}$ is halomorphic with respect to $\varepsilon,$ we obtain
that the number of eigenvalues (counting the multiplicity) of the operators
$L_{t,0}=L_{t}(0)$ and $L_{t,1}=L_{t}$ lying inside $\Gamma(k,t)$ are the same
(see [4, Chap. 7]). Since the operator $L_{t,0}$ has only $m$ eigenvalues
lying inside $\Gamma(k,t)$, the operator $L_{t,1}$ also has only $m$
eigenvalues lying inside $\Gamma(k,t).$ Thus $B(k,t)$ for $k>N$ contains only
$m$ eigenvalues of $L_{t}.$ Instead of $\Gamma(k,t)$ using the circle passing
through the middle points of $S(N,t)$ and of the gap
\begin{equation}
D(N,t)=[(2\pi N-\pi+\pi t)^{n},(2\pi N+\pi t)^{n}-\delta_{N}(t))\tag{34}%
\end{equation}
between $A(N,t)$ and $B(N,t)$ we obtain that $B(N,t)$ also contains only $m$
eigenvalues of $L_{t}.$ In the same way we prove this statement for $k\leq-N.$

Instead of $\Gamma(k,t)$ using the circle passing through the middle points of
$D(N,t)$ and of the gap between $A(N,t)$ and $B(-N,t)$ we obtain that $A(N,t)$
contains only $(2N-1)m$ eigenvalues of $L_{t}$.
\end{proof}

Now we consider the localization of the Bloch eigenvalues for the case
\begin{equation}
M\leq\ \pi^{2}2^{-n+1/2}. \tag{35}%
\end{equation}

\begin{theorem}
If (35) holds, then the spectrum of $L_{t}$ is contained in the intervals
\[
C(t)=(\left(  \pi t\right)  ^{n}-\frac{1}{5}\pi^{n},\left(  \pi t\right)
^{n}+\frac{1}{5}\pi^{n}),
\]
\[
C(1,t)=\left(  \left(  2\pi+\pi t\right)  ^{n}-\frac{3}{10}\left\vert
2+t\right\vert ^{n-2}\pi^{n},\left(  2\pi+\pi t\right)  ^{n}+\frac{3}%
{10}\left\vert 2+t\right\vert ^{n-2}\pi^{n}\right)  ,
\]%
\[
C(-1,t)=\left(  \left(  \pi t-2\pi\right)  ^{n}-\frac{3}{10}\left\vert
t-2\right\vert ^{n-2}\pi^{n},\left(  \pi t-2\pi\right)  ^{n}+\frac{3}%
{10}\left\vert t-2\right\vert ^{n-2}\pi^{n}\right)
\]
and $B(k,t)$ for $\left\vert k\right\vert >1.$ These intervals are pairwise
disjoint and each of these intervals contains only $m$ eigenvalues of $L_{t}.$
\end{theorem}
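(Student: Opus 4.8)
The plan is to follow the three-part strategy of Theorem 1, exploiting that (35) forces $\pi^{-2}M\leq 2^{-n+1/2}<1$ for every odd $n\geq 3$, so the smallest integer $N$ with $N\geq\pi^{-2}M+1$ is exactly $N=2$. Thus Theorem 1 applies directly to all $|k|\geq 2$: it already places the part of $\sigma(L_t)$ outside $A(2,t)=[(-3\pi+\pi t)^n,(3\pi+\pi t)^n)$ in the pairwise disjoint intervals $B(k,t)$, $|k|\geq 2$, each carrying exactly $m$ eigenvalues, while $A(2,t)$ carries exactly $3m$ eigenvalues. The whole remaining task is to resolve these $3m$ eigenvalues inside $A(2,t)$ into the three announced clusters $C(t)$, $C(1,t)$, $C(-1,t)$ around the unperturbed eigenvalues $(\pi t)^n$, $(2\pi+\pi t)^n$, $(\pi t-2\pi)^n$ (the cases $k=0,1,-1$), with $m$ eigenvalues each.

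First I would sharpen the localization for each $k\in\{0,1,-1\}$. For an eigenvalue $\lambda=\lambda(k,t,\varepsilon)\in I(k,t)$ the identity (8) gives
\[
\left\vert \lambda-(2\pi k+\pi t)^{n}\right\vert \leq\frac{R_{k}}{\left\vert (\Psi_{\lambda},\varphi_{k,t})\right\vert },
\]
with $R_{0}=\pi^{n-2}M$ by (10) and $R_{\pm1}=(|2\pm t|\pi)^{n-2}M$ by (9); under (35) these satisfy $R_{0}\leq\pi^{n}2^{-n+1/2}$ and $R_{\pm1}\leq|2\pm t|^{n-2}\pi^{n}2^{-n+1/2}$. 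Hence the radii $\tfrac15\pi^{n}$ and $\tfrac{3}{10}|2\pm t|^{n-2}\pi^{n}$ of $C(t)$ and $C(\pm1,t)$ follow once I prove the projection bounds $|(\Psi_{\lambda},\varphi_{0,t})|>5\cdot2^{-n+1/2}$ and $|(\Psi_{\lambda},\varphi_{\pm1,t})|>\tfrac{10}{3}2^{-n+1/2}$. As in Lemmas 1 and 2 I would obtain these from the normalization (12) by bounding the tail $\sum_{p\neq k}|(\Psi_{\lambda},\varphi_{p,t})|^{2}$: for each $p\neq k$ apply (8) with $k$ replaced by $p$, estimate the numerator by (9)/(10) and the denominator $|\lambda-(2\pi p+\pi t)^{n}|$ from below using $\lambda\in I(k,t)$ together with (16), then sum by (17). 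Under (35) each tail is of order $M^{2}\leq\pi^{4}2^{-2n+1}$ and stays comfortably below the thresholds $1-(5\cdot2^{-n+1/2})^{2}$ and $1-(\tfrac{10}{3}2^{-n+1/2})^{2}$.

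Next I would check pairwise disjointness. Disjointness among the $B(k,t)$, $|k|\geq 2$, and their separation from $A(2,t)\supset C(-1,t)\cup C(t)\cup C(1,t)$ is inherited from Theorem 1(b); what is new is to separate $C(-1,t),C(t),C(1,t)$ from one another and from $B(\pm2,t)$. Writing (16) as $a^{n}-b^{n}>(a-b)a^{n-2}(a+b)$ and inserting consecutive centers, I would bound the gap between $(\pi t)^{n}$ and $(2\pi+\pi t)^{n}$ from below and compare it with the radii sum $\tfrac15\pi^{n}+\tfrac{3}{10}|2+t|^{n-2}\pi^{n}$; since $t\mapsto(2\pi+\pi t)^{n}-(\pi t)^{n}$ is increasing on $(-1,1]$, the worst case is $t\to-1$, where the gap tends to $2\pi^{n}$ while the radii sum tends to $\tfrac12\pi^{n}$, and the symmetric computation handles $C(t)$ versus $C(-1,t)$. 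The separation of $C(\pm1,t)$ from $B(\pm2,t)$ is the same kind of estimate via (16); here the radius $\tfrac{3}{10}|2\pm t|^{n-2}\pi^{n}$ exceeds the generic $\delta_{\pm1}(t)$, but the gap to the next center at $(\pm4\pi+\pi t)^{n}$ is so much larger that positivity is never in question.

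Finally, the count follows as in Theorem 1(c) from the holomorphy of $L_{t,\varepsilon}=L_{t}(0)+\varepsilon(L_{t}-L_{t}(0))$. Because the radii above were derived for every $\varepsilon\in[0,1]$, each gap separating $C(-1,t),C(t),C(1,t)$ and the neighbouring $B(\pm2,t)$ lies in the resolvent set of $L_{t,\varepsilon}$ for all $\varepsilon$; enclosing one cluster at a time by a circle through the midpoints of its two adjacent gaps and using the invariance of the enclosed eigenvalue count [4, Chap. 7], I conclude that each of $C(t),C(1,t),C(-1,t)$ contains the same number of eigenvalues of $L_{t,1}=L_{t}$ as of $L_{t,0}=L_{t}(0)$, namely $m$. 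I expect the $k=0$ cluster to be the main obstacle: it is the only center with two equally near unperturbed neighbours $(\pm2\pi+\pi t)^{n}$, so its tail in (12) carries two dominant terms and it demands the strongest projection bound ($>5\cdot2^{-n+1/2}$), which is exactly where the constant in (35) is calibrated; keeping this bound, and the associated gap in the contour argument, valid uniformly for $t$ near the endpoints of $(-1,1]$ is the delicate point.
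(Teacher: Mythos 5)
Your proposal follows the same architecture as the paper's proof (reduction to $N=2$ via Theorem 1, localization of the three clusters $k=0,\pm 1$ through (8) plus tail bounds, disjointness by elementary estimates, counting by the contour argument of Theorem 1(c)), but it has a genuine gap at the decisive quantitative step. You assert that for each $k\in\{0,1,-1\}$ the tail $\sum_{p\neq k}\left\vert \left(  \Psi_{\lambda},\varphi_{p,t}\right)  \right\vert ^{2}$ is ``of order $M^{2}\leq\pi^{4}2^{-2n+1}$'' and stays ``comfortably below'' the thresholds. That is true for $k=0$, but false for $k=\pm 1$: there the nearest-neighbour term $p=0$ cannot be handled by (16) at all, since the hypothesis $ab>0$ fails (for $k=1$, $p=0$ the relevant pair is $a=1+t$, $b=t$, and $t(1+t)<0$ for $t\in(-1,0)$). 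The correct lower bound for the denominator is $\min_{t\in[-1,1]}\bigl((1+t)^{n}-t^{n}\bigr)=2^{1-n}$, attained at $t=-1/2$, which is exponentially small in $n$; hence the best available bound for this single term is
\[
\left\vert \left(  \Psi_{\lambda(1,t,\varepsilon)},\varphi_{0,t}\right)  \right\vert ^{2}\leq 2^{2n-2}\pi^{-4}M^{2}\leq\tfrac{1}{2}
\]
under (35) --- not anything of order $2^{-2n}$. Adding the remaining contributions as the paper does, the total tail for $k=1$ is bounded only by $\tfrac12+\tfrac1{32}+\tfrac1{48}+\tfrac5{64}=\tfrac{121}{192}\approx 0.630$, while your own requirement, tail $<1-\bigl(\tfrac{10}{3}2^{-n+1/2}\bigr)^{2}\approx 0.653$ at $n=3$, leaves a margin of about $0.02$. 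So the statement survives, but only through the exact bookkeeping you waved away (the paper obtains $\left\vert \left(  \Psi_{\lambda},\varphi_{1,t}\right)  \right\vert ^{2}>\tfrac{71}{192}$, i.e. $\left\vert \left(  \Psi_{\lambda},\varphi_{1,t}\right)  \right\vert >\tfrac35$), and this near-tight inequality is precisely where the constant $2^{-n+1/2}$ in (35) is calibrated; ``comfortably'' is exactly what it is not.

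The same point shows that your diagnosis of the delicate case is inverted. For $k=0$ both neighbour terms $p=\pm1$ do satisfy $ab>0$ (e.g.\ $a=2+t$, $b=1+t$), so (16) applies, the tail really is $O(2^{-2n})$ (the paper's (39)--(40)), and the strong bound $\left\vert \left(  \Psi_{\lambda},\varphi_{0,t}\right)  \right\vert >\tfrac{9}{10}$ comes essentially for free; the hard clusters are $k=\pm1$, for the sign-change reason above. A second, smaller flaw: in the disjointness step you claim the worst case for separating $C(t)$ from $C(1,t)$ is $t\to-1$ because the distance between centres is increasing in $t$; but the radius $\tfrac{3}{10}\left\vert 2+t\right\vert ^{n-2}\pi^{n}$ is increasing too, and the difference (gap minus sum of radii) has negative $t$-derivative at $t=-1$, so its minimum sits at an interior critical point (the paper's variant of this function is checked at $t=-\tfrac{14}{17}$). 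The desired positivity still holds, but not by the monotonicity argument you give.
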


\begin{proof}
If (35) holds, then $\pi^{-2}M<1$ and $\pi^{-2}M+1<2.$ Hence the number $N$
defined in Theorem 1 is $2$ and Theorem 1 continue to hold for $N\geq2.$
Therefore we need to consider the cases $k=1,$ $k=0$ and $k=-1.$ In other
words we study the eigenvalues $\lambda(0,t,\varepsilon),$ $\lambda
(1,t,\varepsilon)$ and $\lambda(-1,t,\varepsilon)$ lying respectively in the
intervals $I(0,t),$ $I(1,t)$ and $I(-1,t).$ First we consider $\lambda
(1,t,\varepsilon).$ The eigenvalue $\lambda(-1,t,\varepsilon)$ can be
considered in the same way. By Lemma 1 the inequalities (13) and (14) hold for
all $\left\vert k\right\vert \geq1$. Thus we have
\begin{equation}
\sum_{\left\vert p\right\vert >1}\left\vert \left(  \Psi_{\lambda
(1,t,\varepsilon)},\varphi_{p,t}\right)  \right\vert ^{2}<\frac{5}{64}%
+\frac{1}{48}. \tag{36}%
\end{equation}
Hence, it remains to estimate $\left\vert \left(  \Psi_{\lambda
(1,t,\varepsilon)},\varphi_{0,t}\right)  \right\vert $ and $\left\vert \left(
\Psi_{\lambda(1,t,\varepsilon)},\varphi_{-1,t}\right)  \right\vert .$ It
follows from (21) and (19) that
\begin{equation}
\left\vert \left(  \Psi_{\lambda(1,t,\varepsilon)},\varphi_{0,t}\right)
\right\vert ^{2}\leq\frac{\pi^{-4}M^{2}}{\left(  (1+t)^{n}-t^{n}\right)  ^{2}}
\tag{37}%
\end{equation}
and
\begin{equation}
\left\vert \left(  \Psi_{\lambda(1,t,\varepsilon)},\varphi_{-1,t}\right)
\right\vert ^{2}\leq\frac{\left(  \left\vert -2+t\right\vert ^{n-2}\right)
^{2}M^{2}\pi^{-4}}{\left(  (1+t)^{n}-(-2+t)^{n}\right)  ^{2}}. \tag{38}%
\end{equation}
First let us consider the right side of (37). One can easily verify that for
odd $n\geq3$ the function $f(t)=(1+t)^{n}-t^{n}$ gets its minimum value on
$[-1,1]$ at $t=-1/2.$ It means that $(1+t)^{n}-t^{n}\geq2^{1-n}$ for all
$t\in(-1,1].$ Therefore using (37) and the inequality $M^{2}\pi^{-4}%
\leq2^{1-2n}$ (see (35)) we obtain
\[
\left\vert \left(  \Psi_{\lambda(1,t,\varepsilon)},\varphi_{0,t}\right)
\right\vert ^{2}\leq2^{2n-2}M^{2}\pi^{-4}\leq\frac{1}{2}.
\]
Now consider the right side of (38). Since both $(1+t)^{n}$ and $-(-2+t)^{n}$
are positive numbers, $\left(  (-2+t)^{n}-(1+t)^{n}\right)  ^{2}%
\geq(-2+t)^{2n}$ and hence
\[
\left\vert \left(  \Psi_{\lambda(1,t,\varepsilon)},\varphi_{-1,t}\right)
\right\vert ^{2}\leq\frac{M^{2}\pi^{-4}}{(-2+t)^{4}}\leq M^{2}\pi^{-4}%
\leq2^{1-2n}\leq\frac{1}{32}%
\]
for $n\geq2.$ These inequalities with (36) and (12) implies that
\[
\left\vert \left(  \Psi_{\lambda(1,t,\varepsilon)},\varphi_{1,t}\right)
\right\vert ^{2}>1-\frac{1}{2}-\frac{1}{32}-\frac{1}{48}-\frac{5}{64}%
=\frac{71}{192},\text{ }\left\vert \left(  \Psi_{\lambda},\varphi
_{1,t}\right)  \right\vert >\frac{3}{5}.
\]
Now using this and (9) for $k=1$ in (8) and (35) we obtain
\[
\left\vert \lambda(1,t,\varepsilon)-\left(  2\pi+\pi t\right)  ^{n}\right\vert
<\frac{5}{3}\left\vert 2+t\right\vert ^{n-2}\pi^{n-2}M\leq\frac{3}%
{10}\left\vert 2+t\right\vert ^{n-2}\pi^{n}.
\]
\ 

In remains to consider $\lambda(0,t,\varepsilon).$ For this instead of (13)
and (14) we prove the following inequalities
\begin{equation}
\sum_{p>0}\left\vert \left(  \Psi_{\lambda(0,t,\varepsilon)},\varphi
_{p,t}\right)  \right\vert ^{2}<\frac{49}{48}2^{1-2n} \tag{39}%
\end{equation}
and
\begin{equation}
\sum_{p<0}\left\vert \left(  \Psi_{\lambda(0,t,\varepsilon)},\varphi
_{p,t}\right)  \right\vert ^{2}<\frac{49}{48}2^{1-2n}. \tag{40}%
\end{equation}
First we prove (39). Using (18) for $k=0$ and (16) we obtain%
\[
\sum_{p>0}\left\vert \left(  \Psi_{\lambda(0,t,\varepsilon)},\varphi
_{p,t}\right)  \right\vert ^{2}<\sum_{p>0}\frac{\pi^{-4}M^{2}}{(2p-1)^{4}}.
\]
Therefore (35) and the obvious relation
\[
\sum_{s=0}^{\infty}\frac{1}{(2s+1)^{4}}\leq1+\frac{1}{16}\int\limits_{1}%
^{\infty}\frac{1}{x^{4}}dx=\frac{49}{48}%
\]
give the proof of (39). Instead of (18) using (19) and repeating the proof
(39) we get the proof of (40). \ Now from (39), (40) and (12) we obtain
\begin{equation}
\left\vert \left(  \Psi_{\lambda(0,t,\varepsilon)},\varphi_{0,t}\right)
\right\vert >\sqrt{1-\frac{49}{48}2^{2-2n}}\geq\sqrt{1-\frac{49}{48\times
16}^{{}}}>\frac{9}{10}. \tag{41}%
\end{equation}
Using (41) and (10) in (8) for $k=0$ we obtain
\[
\left\vert \lambda(0,t,\varepsilon)-\left(  \pi t\right)  ^{n}\right\vert
\leq\frac{10}{9}\pi^{n-2}M\leq\frac{10}{9}\pi^{n}2^{-n+1/2}<\frac{1}{5}\pi
^{n}.
\]
Thus the eigenvalues of $L_{t}$ is contained in the intervals $C(-1,t),$
$C(t),$ $C(1,t)$ and $B(k,t)$ for $\left\vert k\right\vert >1.$

By Theorem 1$(b)$ to prove that these intervals are pairwise disjoint we need
only prove that $C(t)\cap C(1,t)=\varnothing$ and $C(t)\cap
C(-1,t)=\varnothing.$ We prove the first of them. The proof of the second of
them is the same. For this we prove that the left end point of $C(1,t)$ is on
the right of the right end point of $C(t).$ Since $\left\vert 2+t\right\vert
^{n-2}\leq\left\vert 2+t\right\vert ^{n}$ it is enough to show that
$f(t):=\frac{7}{10}\left(  2+t\right)  ^{n}-t^{n}-\frac{1}{5}>0$ for all
$t\in(-1,1].$ This can be verified by calculating the values of$\ f(t)$ at the
end points of $[-1,1]$ and at the critical point $t=-\frac{14}{17}$. Repeating
the proof of Theorem 1$(c)$ we obtain that each of the intervals $C(-1,t),$
$C(t),$ $C(1,t)$ and $B(k,t)$ for $\left\vert k\right\vert >1.$ contains only
$m$ eigenvalues of $L_{t}.$
\end{proof}

\section{On the spectrum of $L$}

First of all using Theorem 1 we give a numerations of the eigenvalues of
$L_{t}$ for\ $t\in(-1,1]$ by the elements of $\mathbb{Z}\backslash\left\{
0\right\}  $ and define the band functions of $L.$

\begin{notation}
Let us denote the eigenvalues of $L_{t}$ lying on the right and left of
$D(N,t)$ by $\lambda_{1}(t)\leq\lambda_{2}(t)\leq\cdot\cdot\cdot.$ and
$\lambda_{-1}(t)\geq\lambda_{-2}(t)\geq\cdot\cdot\cdot$\ respectively, where
$D(N,t)$ is defined in (34). In this way we obtain the numeration of of the
eigenvalues of $L_{t}$ by the elements of $\mathbb{Z}\backslash\left\{
0\right\}  $ such that%
\[
\cdot\cdot\cdot\leq\lambda_{-2}(t)\leq\lambda_{-1}(t)\leq\lambda_{1}%
(t)\leq\lambda_{2}(t)\leq\cdot\cdot\cdot.
\]
The set $\Delta_{s}=\left\{  \lambda_{s}(t):t\in(-1,1]\right\}  $ and the
function $\lambda_{s}:(-1,1]\rightarrow\mathbb{R}$ are said to be the $s$-th
band and $s$-th band function of $L.$ In these notations the eigenvalues of
$L_{t}$ lying in $B(k,t),$ $A(N,t)$ and $B(-k,t)$ for $\left\vert k\right\vert
\geq N$ are respectively%
\[
\lambda_{(k-N)m+1}(t)\leq\lambda_{(k-N)m+2}(t)\leq\cdot\cdot\cdot\leq
\lambda_{(k-N)m+m}(t),
\]%
\[
\lambda_{-\left(  2N-1\right)  m}(t)\leq\lambda_{-\left(  2N-1\right)
m+1}(t)\cdot\cdot\cdot\leq\lambda_{-1}(t)
\]
and
\[
\lambda_{-\left(  N+k\right)  m}(t)\leq\lambda_{-\left(  N+k\right)
m+1}(t)\cdot\cdot\cdot\leq\lambda_{-\left(  N+k\right)  m+m-1}(t).
\]

\end{notation}

Now we prove that these band functions continuously depend on $t.$ For this we
use the following well-known statements (see for example [9] Chap. 3)
formulated here as summaries.

\begin{summary}
The eigenvalues of $L_{t}$ are the roots of the characteristic determinant
\[
\Delta(\lambda,t)=\det(Y_{j}^{(v-1)}(1,\lambda)-e^{i\pi t}Y_{j}^{(v-1)}%
(0,\lambda))_{j,v=1}^{n}=
\]%
\[
e^{inm\pi t}+f_{1}(\lambda)e^{i(nm-1)\pi t}+f_{2}(\lambda)e^{i(nm-2)\pi
t}+\cdot\cdot\cdot+f_{nm-1}(\lambda)e^{i\pi t}+1
\]
which is a polynomial of $e^{i\pi t}$\ with entire coefficients $f_{1}%
(\lambda),f_{2}(\lambda),...$ , where

$Y_{1}(x,\lambda),Y_{2}(x,\lambda),\ldots,Y_{n}(x,\lambda)$ are the solutions
of the matrix equation
\[
(-i)^{n}Y^{\left(  n\right)  }+P_{2}(x)Y^{(n-2)}+P_{3}(x)Y^{(n-3)}+\cdot
\cdot\cdot+P_{2s}(x)Y=\lambda Y
\]
satisfying $Y_{k}^{(j)}(0,\lambda)=O$ for $j\neq k-1$ and $Y_{k}%
^{(k-1)}(0,\lambda)=I$. Here $O$ and $I$ are $m\times m$ zero and identity
matrices respectively.
\end{summary}

\begin{summary}
The Green's function of $L_{t}-\lambda I$ is defined by formula%
\[
G(x,\xi,\lambda,t)=g(x,\xi,\lambda)-\frac{1}{\Delta(\lambda,t)}\sum
\limits_{j,v=1}^{n}Y_{j}(x,\lambda)V_{jv}(x,\lambda)U_{v}(g),
\]
where $g$ does not depend on $t$ and $V_{jv}$ is the transpose of that $m$th
order matrix consisting of the cofactor of the element $U_{v}(Y_{j})$ in
$\det(U_{v}(Y_{j}))_{j,v=1}^{n}.$ Hence the entries of the matrices
$V_{jv}(x,\lambda)$ and $U_{v}(g)$ either do not depend on $t$ or have the
form $u^{(v)}(1,\lambda)-e^{i\pi t}u^{(v)}(0,\lambda)$ and $h(1,\xi
,\lambda)-e^{i\pi t}h(0,\xi,\lambda)$ respectively, where the functions $u$
and $h$ do not depend on $t.$ Moreover, the Green's function is a meromorphic
matrix function of the parameter $\lambda$ and only eigenvalues of $L_{t}$ can
be poles of this function.
\end{summary}

Using these summaries we prove that for each $k\in\mathbb{Z}\backslash\left\{
0\right\}  $\ the function $\lambda_{k}$ defined in Notation 1 is continuous
at each point $t_{0}\in(-1,1].$ For this we introduce the following notations.

\begin{notation}
In Notation 1 the eigenvalues of the operator $L_{t_{0}}$ are denoted by
counting the multiplicity. Let us denote by $\mu_{1}(t_{0}),\mu_{2}%
(t_{0}),...$ the eigenvalues of $L_{t_{0}}$ lying on the right of $D(N,t)$
without counting the multiplicity, where $D(N,t)$ is defined in (34). In the
other words, $\mu_{1}(t_{0})<\mu_{2}(t_{0})<\cdot\cdot\cdot$ are the
eigenvalues of $L_{t_{0}}$ with the multiplicities $k_{1},k_{2},...$
respectively. Since $\lambda_{k}(t_{0})\rightarrow\infty$ as $k\rightarrow
\infty,$ for each $k$ there exists $p$ such that $k\leq k_{1}+k_{2}+\cdot
\cdot\cdot+k_{p}.$ This notation with the Notation 1 implies that
\[
\lambda_{1}(t_{0})=\lambda_{2}(t_{0})=\cdot\cdot\cdot=\lambda_{s_{1}}%
(t_{0})=\mu_{1}(t_{0}),
\]%
\[
\lambda_{s_{1}+1}(t_{0})=\lambda_{s_{1}+2}(t_{0})=\cdot\cdot\cdot
=\lambda_{s_{2}}(t_{0})=\mu_{2}(t_{0}),
\]
and
\[
\lambda_{s_{p-1}+1}(t_{0})=\lambda_{s_{p-1}+2}(t_{0})=\cdot\cdot\cdot
=\lambda_{s_{p}}(t_{0})=\mu_{p}(t_{0}),
\]
where $s_{p}=k_{1}+k_{2}+\cdot\cdot\cdot+k_{p}$ and $k\leq s_{p}.$
\end{notation}

Now we are ready to prove the following two theorems.

\begin{theorem}
$(a)$ For every $r>0$ satisfying the inequality
\[
r<\tfrac{1}{2}\min_{j=1,2,...p}\left(  \mu_{j+1}(t_{0})-\mu_{j}(t_{0})\right)
\]
there exists $\delta>0$ such that the operators $L_{t}$ for $t\in(t_{0}%
-\delta,t_{0}+\delta)$ have \ $k_{j}$ eigenvalues in the intervals $\left(
\mu_{j}(t_{0})-r,\mu_{j}(t_{0})-r\right)  $, where $j=1,2,...,p$ and $t_{0}%
\in(-1,1].$

$(b)$ There exists $\alpha\in(0,\delta]$ such that the eigenvalues of the
operator $L_{t}$ for $t\in(t_{0}-\alpha,t_{0}+\alpha)$ lying in $\left(
\mu_{j}(t_{0})-r,\mu_{j}(t_{0})+r\right)  $ are $\lambda_{s_{j-1}%
+1}(t),\lambda_{s_{j-1}+2}(t),...,\lambda_{s_{j}}(t),$ where $s_{0}=0$ and
$s_{j}$ for $j\geq1$ are defined in Notation 2.
\end{theorem}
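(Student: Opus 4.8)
The plan is to prove both parts by a contour-counting argument that exploits the joint continuity of the characteristic determinant $\Delta(\lambda,t)$ in $(\lambda,t)$. For part $(a)$ I would fix the distinct eigenvalues $\mu_1(t_0)<\mu_2(t_0)<\cdots<\mu_p(t_0)$ with multiplicities $k_1,\dots,k_p$ and, for each $j$, take the circle $\gamma_j$ of radius $r$ centered at $\mu_j(t_0)$. The hypothesis $r<\tfrac12\min_j(\mu_{j+1}(t_0)-\mu_j(t_0))$ guarantees that the $\gamma_j$ are pairwise disjoint and that $\gamma_j$ encloses the single eigenvalue $\mu_j(t_0)$ of $L_{t_0}$ and no other. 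By Summary 1 the eigenvalues of $L_t$ are exactly the zeros of $\Delta(\cdot,t)$, and $\Delta$ is entire in $\lambda$ and a polynomial in $e^{i\pi t}$, hence jointly continuous in $(\lambda,t)$; the order of $\mu_j(t_0)$ as a zero of $\Delta(\cdot,t_0)$ equals its multiplicity $k_j$. Since $\Delta(\cdot,t_0)$ has no zero on the compact set $\bigcup_j\gamma_j$, continuity yields $\delta>0$ with $\Delta(\lambda,t)\neq 0$ for all $\lambda\in\bigcup_j\gamma_j$ and $|t-t_0|<\delta$.

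Then the number of eigenvalues of $L_t$ inside $\gamma_j$, counted with multiplicity, is
\[
n_j(t)=\frac{1}{2\pi i}\oint_{\gamma_j}\frac{\partial_\lambda\Delta(\lambda,t)}{\Delta(\lambda,t)}\,d\lambda,
\]
which is integer-valued and, by the nonvanishing of the denominator together with continuity of the integrand, continuous in $t$ on $(t_0-\delta,t_0+\delta)$; hence $n_j(t)\equiv n_j(t_0)=k_j$. This is precisely assertion $(a)$, with the evident correction of the misprint in the right endpoint of the interval. Equivalently one may use the Riesz projection $P_j(t)=-\tfrac{1}{2\pi i}\oint_{\gamma_j}(L_t-\lambda I)^{-1}\,d\lambda$, whose kernel is the Green's function of Summary 2 and is therefore norm-continuous in $t$; the rank of a norm-continuous family of projections is locally constant, giving the same count and automatically matching algebraic with geometric multiplicities for the self-adjoint $L_t$.

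For part $(b)$ I would anchor the numbering at the reference gap. By Theorem 1 the interval $D(N,t)$ stays in the resolvent set of $L_t$ for every $t$, so $\Delta(\cdot,t)$ never vanishes there and the splitting point of the numeration $\lambda_{\pm k}(t)$ of Notation 1 moves without any eigenvalue crossing it. Choosing the point $c=\tfrac12(\mu_p(t_0)+\mu_{p+1}(t_0))$, which lies in a gap of $L_{t_0}$, and a point $d(t)$ in $D(N,t)$, I would enclose the real segment $(d(t),c)$ by a rectangle $R(t)$ whose horizontal sides lie off the real axis, where $\Delta$ cannot vanish because $L_t$ is self-adjoint, and whose vertical sides stand over $d(t)\in D(N,t)$ and over $c$. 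Shrinking $\delta$ to some $\alpha\in(0,\delta]$ so that $\Delta(c,t)\neq0$ as well, the contour integral of $\partial_\lambda\Delta/\Delta$ over $R(t)$ is again a continuous integer, hence constant and equal to $k_1+\cdots+k_p=s_p$.

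Comparing this total with the per-cluster counts from part $(a)$ forces all $s_p$ eigenvalues of $L_t$ lying to the right of $D(N,t)$ and left of $c$ to sit inside $\bigcup_{j=1}^p(\mu_j(t_0)-r,\mu_j(t_0)+r)$, with none in the intervening gaps. Since the numeration of Notation 1 lists the eigenvalues to the right of $D(N,t)$ in increasing order, the first $k_1$ of them lie in the first neighborhood and are $\lambda_1(t),\dots,\lambda_{s_1}(t)$, the next $k_2$ are $\lambda_{s_1+1}(t),\dots,\lambda_{s_2}(t)$, and in general those in $(\mu_j(t_0)-r,\mu_j(t_0)+r)$ are $\lambda_{s_{j-1}+1}(t),\dots,\lambda_{s_j}(t)$, as claimed. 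The main obstacle is exactly this final bookkeeping: one must rule out any eigenvalue of $L_t$ escaping into an inter-cluster gap for $t$ near $t_0$, and must ensure the anchoring gap $D(N,t)$ stays eigenvalue-free so the global indices are preserved. This is what the total-count argument over $R(t)$ and Theorem 1 together supply, and it is the reason $\alpha$ may need to be strictly smaller than $\delta$.
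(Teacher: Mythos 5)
Your proposal is correct, and in part $(a)$ your ``equivalent'' fallback is in fact the paper's actual proof, while your primary mechanism is genuinely different. The paper never integrates $\partial_\lambda\Delta/\Delta$: it bounds $|\Delta(\lambda,t)|>a/2$ on the circle $D_j$ for $t$ near $t_0$, deduces from Summary 2 that the Green's function satisfies $|G(x,\xi,\lambda,t)-G(x,\xi,\lambda,t_0)|\le M|t-t_0|$, hence that the resolvent and the Riesz projection $P_t=\int_{D_j}(L_t-\lambda I)^{-1}d\lambda$ depend continuously on $t$, and concludes that the eigenvalue count inside $D_j$ is locally constant. Your argument-principle route is leaner (no resolvent estimate needed), but it hinges on a fact you assert without justification: that the order of $\mu_j(t_0)$ as a zero of $\Delta(\cdot,t_0)$ equals the multiplicity $k_j$. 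That equality (order of the zero of the characteristic determinant equals the algebraic multiplicity, which equals the geometric multiplicity here by self-adjointness) is classical and is found in [9], the same source behind Summaries 1 and 2, but it must be invoked explicitly; otherwise the winding number $n_j(t)$ counts something a priori unrelated to $k_j$. Avoiding this issue is precisely what the projection argument buys the paper. In part $(b)$ the difference is local versus global: the paper encloses each eigenvalue-free interval $I_0=[b,\mu_1(t_0)-r]$ and $I_j=[\mu_j(t_0)+r,\mu_{j+1}(t_0)-r]$ by a closed curve in the resolvent set of $L_{t_0}$ and reruns the part-$(a)$ argument to conclude these intervals stay eigenvalue-free for $t$ near $t_0$, whereas you make one global count over a rectangle reaching from $D(N,t)$ to the midpoint $c$ of the last gap and compare it with the sum of the cluster counts from part $(a)$; both yield the same bookkeeping, anchored in the same way on the fact (Theorem 1) that $D(N,t)$ is eigenvalue-free for every $t$, so the indexation of Notation 1 cannot shift. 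Two small repairs to yours: fix the left vertical side of the rectangle at the midpoint $b$ of $D(N,t_0)$ --- which lies in $D(N,t)$ for $t$ near $t_0$, as the paper observes --- rather than at a $t$-dependent point $d(t)$, so that the continuity in $t$ of your integer-valued integral is immediate; and note that placing all cluster neighborhoods inside $(b,c)$ uses $\mu_1(t_0)-r>b$, an assumption the paper also makes implicitly when it writes $I_0=[b,\mu_1(t_0)-r]$.
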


\begin{proof}
$(a)$ By the definition of $r$, the circle $D_{j}=\left\{  z\in\mathbb{C}%
:\left\vert z-\mu_{j}(t_{0})\right\vert =r\right\}  $ belong to the resolvent
set of the operator $L_{t_{0}}.$ It implies that $\Delta(\lambda,t_{0})\neq0$
for each $\lambda\in D_{j}.$ Since $\Delta(\lambda,t_{0})$ is a continuous
function on $D_{j},$ there exists $a>0$ such that $\left\vert \Delta
(\lambda,t_{0})\right\vert >a$ for all $\lambda\in D_{j}.$ Moreover,
$\Delta(\lambda,t)$ is a polynomial of $e^{i\pi t}$ with entire coefficients.
Therefore, there exists $\delta>0$ such that
\begin{equation}
\left\vert \Delta(\lambda,t)\right\vert >a/2 \tag{42}%
\end{equation}
for all $t\in(t_{0}-\delta,t_{0}+\delta)$ and $\lambda\in D_{j}.$ It means
that $D_{j}$ belong to the resolvent set of $L_{t}$ for all $t\in(t_{0}%
-\delta,t_{0}+\delta).$ It is well-known that
\begin{equation}
\left(  L_{t}-\lambda I\right)  ^{-1}f(x)=\int_{0}^{1}G(x,\xi,\lambda
,t)f(\xi)d\xi, \tag{43}%
\end{equation}
where $G(x,\xi,\lambda,t)$ is the Green's function of $L_{t}$ defined in
Summary 2. On the other hand, it easily follows from Summary 2 and (42) that
there exists $M$ such that
\begin{equation}
\left\vert G(x,\xi,\lambda,t)-G(x,\xi,\lambda,t_{0})\right\vert \leq
M\left\vert t-t_{0}\right\vert \tag{44}%
\end{equation}
for all $x\in\lbrack0,1],$ $\xi\in\lbrack0,1]$, $\lambda\in D_{j}$ and
$t\in(t_{0}-\delta,t_{0}+\delta).$ Therefore using (43), (44) and Summary 2
one can easily verify that $\left(  L_{t}-\lambda I\right)  ^{-1}$ for
$\lambda\in D_{j}$ and the projection
\[
P_{t}=%
%TCIMACRO{\tint \nolimits_{D_{j}}}%
%BeginExpansion
{\textstyle\int\nolimits_{D_{j}}}
%EndExpansion
\left(  L_{t}-\lambda I\right)  ^{-1}d\lambda
\]
continuously depend on $t\in(t_{0}-\delta,t_{0}+\delta).$ This implies that
the operators $L_{t}$ for each $t\in(t_{0}-\delta,t_{0}+\delta)$ have
\ $k_{j}$ eigenvalues in the interval $\left(  \mu_{j}(t_{0})-r,\mu_{j}%
(t_{0})+r\right)  ,$ since $L_{t_{0}}$ have \ $k_{j}$ eigenvalues (counting
the multiplicity) inside $D_{j}$.

$(b)$ Let $b$ be the middle point of the interval $D(N,t_{0}),$ where $D(N,t)$
is defined in (34). It is clear that $b\in D(N,t)$ if $t\in(t_{0}-\delta
,t_{0}+\delta)$ and $\delta$ is a small number. Introduce the notation
$I_{0}:=[b,\mu_{1}(t_{0})-r]$ and $I_{j}:=\left[  \mu_{j}(t_{0})+r,\mu
_{j+1}(t_{0})-r\right]  $ for $j=1,2,...,p.$ Let $\gamma_{j}$ be the closed
curve which enclose the interval $I_{j}$ and belong to the resolvent set of
$L_{t_{0}}.$ Since $L_{t_{0}}$ has no eigenvalues in the intervals $I_{j}$ for
$j=0,1,...,p,$ instead of $D_{j}$ taking the closed curve $\gamma_{j}$ and
arguing as above we obtain that there exists $\delta_{j}$ such that $L_{t}$
for $t\in(t_{0}-\delta_{j},t_{0}+\delta_{j})$ also have no eigenvalues in the
interval $I_{j}$. Therefore, if $\alpha=\min\left\{  \delta,\delta_{1}%
,\delta_{2},\cdot\cdot\cdot,\delta_{p}\right\}  ,$ then the eigenvalues of
$L_{t}$ for $t\in(t_{0}-\alpha,t_{0}+\alpha)$ lying in $\left(  \mu_{j}%
(t_{0})-r,\mu_{j}(t_{0})+r\right)  $ are $\lambda_{s_{j-1}+1}(t),\lambda
_{s_{j-1}+2}(t),...,\lambda_{s_{j}}(t)$ for $j=1,2,...,p.$
\end{proof}

Now using these statements we prove that for each $k\in\mathbb{Z}%
\backslash\left\{  0\right\}  $\ the band function $\lambda_{k}$ is continuous
at each point $t_{0}\in(-1,1].$

\begin{theorem}
For each $k\in\mathbb{Z}\backslash\left\{  0\right\}  $\ the function
$\lambda_{k}$ defined in Notation 1 is continuous at $t_{0}\in(-1,1].$.
\end{theorem}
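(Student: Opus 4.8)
The plan is to prove continuity of each band function $\lambda_k$ at an arbitrary point $t_0 \in (-1,1]$ by using Theorem 3, which already localizes the eigenvalues near $t_0$ and identifies which band functions occupy each cluster. First I would fix $k \in \mathbb{Z}\setminus\{0\}$ and, without loss of generality, treat the case $k>0$ (the case $k<0$ is symmetric, using the eigenvalues to the left of $D(N,t)$). By Notation 2, the value $\lambda_k(t_0)$ coincides with one of the distinct eigenvalues $\mu_j(t_0)$ of $L_{t_0}$; choose the index $j$ so that $s_{j-1} < k \leq s_j$, i.e. $\lambda_k(t_0) = \mu_j(t_0)$, where $s_0 = 0$ and the $s_j$ are defined in Notation 2.

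Next I would invoke Theorem 3. Given any $\varepsilon > 0$, shrink it if necessary so that $r := \varepsilon$ satisfies the separation condition $r < \tfrac{1}{2}\min_{j}(\mu_{j+1}(t_0) - \mu_j(t_0))$ required in Theorem 3. By Theorem 3$(a)$ there is a $\delta > 0$ so that for $t \in (t_0-\delta, t_0+\delta)$ the operator $L_t$ has exactly $k_j$ eigenvalues in $(\mu_j(t_0)-r, \mu_j(t_0)+r)$, and by Theorem 3$(b)$ there is an $\alpha \in (0,\delta]$ such that these $k_j$ eigenvalues are precisely $\lambda_{s_{j-1}+1}(t), \ldots, \lambda_{s_j}(t)$. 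Since $s_{j-1} < k \leq s_j$, the band function $\lambda_k(t)$ is one of these, so for all $t \in (t_0-\alpha, t_0+\alpha)$ we have $\lambda_k(t) \in (\mu_j(t_0)-r, \mu_j(t_0)+r)$. Recalling $\mu_j(t_0) = \lambda_k(t_0)$ and $r = \varepsilon$, this says exactly that $|\lambda_k(t) - \lambda_k(t_0)| < \varepsilon$ whenever $|t-t_0| < \alpha$, which is the desired continuity.

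The one genuine subtlety, and the step I would be most careful about, is checking that the monotone labeling of Notation 1 is compatible with the cluster description of Theorem 3$(b)$ so that the specific function $\lambda_k$ — not merely some band function with index in $(s_{j-1}, s_j]$ — stays inside the shrinking interval. This is guaranteed because both Notation 1 and Notation 2 order the eigenvalues nondecreasingly and Theorem 3$(b)$ identifies the cluster in $(\mu_j(t_0)-r,\mu_j(t_0)+r)$ as exactly the consecutive block $\lambda_{s_{j-1}+1}(t) \leq \cdots \leq \lambda_{s_j}(t)$; the monotone ordering forces $\lambda_k(t)$ with $s_{j-1} < k \leq s_j$ to be one of these block members, and membership in the interval is all that continuity requires. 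I would also note explicitly that the argument is uniform in the sign of $k$ and that the base point $b$ of $D(N,t)$ used in Theorem 3$(b)$ varies continuously with $t$, so the reference interval separating the positive and negative branches does not interfere for $t$ near $t_0$. With these observations the continuity of $\lambda_k$ at $t_0$ follows directly, and since $t_0$ and $k$ were arbitrary, every band function is continuous on $(-1,1]$.
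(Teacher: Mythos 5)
Your proof is correct, but it takes a genuinely different route from the paper's. You exploit the fact that Theorem 3 holds for \emph{every} admissible radius $r$: taking $r=\varepsilon$ (shrunk so that the separation condition $r<\tfrac12\min_j(\mu_{j+1}(t_0)-\mu_j(t_0))$ holds), Theorem 3$(b)$ hands you an $\alpha>0$ with $\lambda_k(t)\in(\mu_j(t_0)-\varepsilon,\mu_j(t_0)+\varepsilon)$ for all $|t-t_0|<\alpha$, and since $\mu_j(t_0)=\lambda_k(t_0)$ this is literally the $\varepsilon$--$\delta$ definition of continuity. The paper instead fixes a single $r$, considers an arbitrary sequence $t_p\rightarrow t_0$, notes by Theorem 3$(b)$ that $\lambda_k(t_p)$ is trapped in the fixed window $(\mu_j(t_0)-r,\mu_j(t_0)+r)$, and then upgrades ``trapped in a window'' to actual convergence by a limit-point argument: any limit point $\lambda$ of the bounded sequence $\lambda_k(t_p)$ satisfies $\Delta(\lambda,t_0)=0$ by the joint continuity of the characteristic determinant (Summary 1), hence is an eigenvalue of $L_{t_0}$ in that window, hence equals $\mu_j(t_0)=\lambda_k(t_0)$ by Theorem 3$(b)$. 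Your version is shorter and avoids the compactness and determinant machinery, at the cost of invoking the full strength of Theorem 3 (a fresh $\delta$ and $\alpha$) for each $\varepsilon$; the paper's version uses Theorem 3 only for one radius and pays with the sequential argument via $\Delta$, which is the same device the author reuses in the proof of Theorem 5$(b)$. Your closing worries about compatibility of the labelings and about the base point $b$ of $D(N,t)$ are harmless but unnecessary: Theorem 3$(b)$ as stated already identifies the eigenvalues of $L_t$ in the window as exactly the consecutive block $\lambda_{s_{j-1}+1}(t),\dots,\lambda_{s_j}(t)$, which is all your argument uses.
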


\begin{proof}
We prove this theorem for $k>0.$ For $k<0$ the proof is similar. Consider any
sequence $\left\{  \left(  \lambda_{k}(t_{p}),t_{p}\right)  :p\in
\mathbb{N}\right\}  $ such that $t_{p}\in(t_{0}-\alpha,t_{0}+\alpha)$ for all
$p\in\mathbb{N}$ and $t_{p}\rightarrow t_{0}$ as $p\rightarrow\infty,$ where
$\alpha$ is defined in Theorem 3$(b)$. If $k\in\lbrack s_{j-1}+1,s_{j}],$ then
by Theorem 3$(b)$ we have $\lambda_{k}(t_{p})\in\left(  \mu_{j}(t_{0}%
)-r,\mu_{j}(t_{0})+r\right)  .$ Let $\left(  \lambda,t_{0}\right)  $ be any
limit point of $\left\{  \left(  \lambda_{k}(t_{p}),t_{p}\right)
:p\in\mathbb{N}\right\}  .$ Since $\Delta$ is continuous with respect to the
pair $(\lambda,t)$ and $\Delta\left(  \lambda_{k}(t_{p}),t_{p}\right)  =0$ for
all $p$ we have $\Delta(\lambda,t_{0})=0.$ It implies that $\lambda$ is an
eigenvalue of $L_{t_{0}}$ lying in $\left(  \mu_{j}(t_{0})-r,\mu_{j}%
(t_{0})+r\right)  $. Hence, by Theorem 3$(b)$ we have $\lambda=\lambda
_{s_{j-1}+1}(t_{0})=\lambda_{s_{j-1}+2}(t_{0})=\cdot\cdot\cdot=\lambda_{s_{j}%
}(t_{0})=\lambda_{k}(t_{0}).$ Thus $\lambda_{k}(t_{p})\rightarrow\lambda
_{k}(t_{0})$ as $p\rightarrow\infty$ for any sequence $\left\{  t_{p}%
:p\in\mathbb{N}\right\}  $ converging to $t_{0}$ and hence $\lambda_{n}$ is
continuous at $t_{0}$.
\end{proof}

By Theorem 4 the bands $\Delta_{s}$ defined in Notation 1 are the intervals of
the spectrum $\sigma(L)$ of $L.$ Since the spectrum $\sigma(L)$ is a closed
set the closure $\overline{\Delta_{s}}$ of $\Delta_{s}$ is the closed
intervals of the spectrum $\sigma(L)$ of $L.$ The space between $\overline
{\Delta_{s}}$ and $\overline{\Delta_{s+1}}$ are the gaps in $\sigma(L).$ Now
we prove the following theorem about the gaps and overlapping problem.

\begin{theorem}
$(a)$ The intervals
\begin{equation}
((2\pi k-\pi)^{n}+\delta_{k}(-1),(2\pi k+\pi)^{n}-\delta_{k}(1)) \tag{45}%
\end{equation}
and
\begin{equation}
((-2\pi k-\pi)^{n}+\delta_{-k}(-1),(-2\pi k+\pi)^{n}-\delta_{-k}(1)) \tag{46}%
\end{equation}
for $k\geq N$ are contained \ respectively in each of the bands%
\begin{equation}
\Delta_{(k-N)m+1},\Delta_{(k-N)m+2},,...,\Delta_{(k-N)m+m} \tag{47}%
\end{equation}
and
\begin{equation}
\Delta_{-(N+k)m},\Delta_{-(N+k)m+1},,...,\Delta_{-(N+k)m+m-1}, \tag{48}%
\end{equation}
where $\delta_{k}(t)$ and $N$ are defined in (5) and Theorem 1.

$(b)$ The spectrum of $L$ contains the intervals $\left[  (2\pi N)^{n}%
,\infty\right)  $ and $(-\infty,(-2\pi N)^{n}].$

$(c)$ The number of gaps in the spectrum of $L$ is not greater than
$m(2N-1)-1.$
\end{theorem}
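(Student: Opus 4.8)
The plan is to obtain all three parts from Theorem 1, the continuity of the band functions (Theorem 4), and one structural remark: the quasiperiodic conditions (2) depend on $t$ only through $e^{i\pi t}$, so $L_{-1}=L_{1}$; consequently $B(k,-1)=B(k-1,1)$ and $\delta_{k}(-1)=\delta_{k-1}(1)$ (both centre and radius match), and as $t\to-1^{+}$ the $m$ eigenvalues of $L_{t}$ in $B(k,t)$ tend to the $m$ eigenvalues of $L_{1}$ in $B(k-1,1)$. For part $(a)$ fix $k\geq N$ and $j\in\{1,\dots,m\}$ and put $s=(k-N)m+j$. By Notation 1 and Theorem 1$(c)$ we have $\lambda_{s}(t)\in B(k,t)$ for every $t\in(-1,1]$, so $\lambda_{s}(1)>(2\pi k+\pi)^{n}-\delta_{k}(1)$, while the right endpoint $(2\pi k+\pi t)^{n}+\delta_{k}(t)$ of $B(k,t)$ tends to $(2\pi k-\pi)^{n}+\delta_{k}(-1)$ as $t\to-1^{+}$, forcing $\lambda_{s}(t)$ below that value for $t$ near $-1$. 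Given any $v$ in the interval (45), these two facts place $\lambda_{s}$ on opposite sides of $v$ at $t=1$ and at some $t$ near $-1$; continuity of $\lambda_{s}$ then yields $t^{\ast}$ with $\lambda_{s}(t^{\ast})=v$, i.e. $v\in\Delta_{s}$. As $j$ is arbitrary this proves the first inclusion, and (46) follows by replacing $k$ with $-k$.

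For part $(b)$ I would show $\sigma(L)$ has no gap above $(2\pi N)^{n}$. The intervals (45) for consecutive $k$ and $k+1$ are separated exactly by $\overline{B(k,1)}=[(2\pi k+\pi)^{n}-\delta_{k}(1),(2\pi k+\pi)^{n}+\delta_{k}(1)]$, since $(2\pi(k+1)-\pi)^{n}=(2\pi k+\pi)^{n}$ and $\delta_{k+1}(-1)=\delta_{k}(1)$. By part $(a)$ the band $\Delta_{(k-N)m+m}$ reaches $\lambda_{(k-N)m+m}(1)$, the largest eigenvalue of $L_{1}$ in $B(k,1)$, while $\Delta_{(k+1-N)m+1}$ reaches, as $t\to-1^{+}$, the smallest eigenvalue of $L_{1}$ in $B(k,1)$ (here $B(k+1,-1)=B(k,1)$ and $L_{-1}=L_{1}$). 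Since these two values lie in $B(k,1)$ with the smaller below the larger, the closures of the bands (47) for $k$ and $k+1$, together with the two intervals (45), cover all of $\overline{B(k,1)}$. Chaining over $k\geq N$ gives $[(2\pi N-\pi)^{n}+\delta_{N}(-1),\infty)\subseteq\sigma(L)$, which contains $[(2\pi N)^{n},\infty)$ because $(2\pi N-\pi)^{n}+\delta_{N}(-1)<(2\pi N)^{n}$; the reflected argument gives $(-\infty,(-2\pi N)^{n}]$.

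For part $(c)$, part $(b)$ confines every gap of $\sigma(L)$ to $(-(2\pi N)^{n},(2\pi N)^{n})$. Inside this window the spectrum is carried by the $(2N-1)m$ closed bands $\overline{\Delta_{-1}},\dots,\overline{\Delta_{-(2N-1)m}}$ (the eigenvalues of $L_{t}$ in $A(N,t)$, by Notation 1) together with the lower tails of the two half-lines of part $(b)$. These $(2N-1)m$ bands leave at most $(2N-1)m-1$ interstices between consecutive members, so it remains to check that the block abuts each half-line with no extra gap. For the upper end, the half-line $R_{+}=\bigcup_{s\geq1}\overline{\Delta_{s}}$ (a single unbounded interval by part $(b)$) has $\inf R_{+}\leq\lim_{t\to-1^{+}}\lambda_{1}(t)$, and this limit is an eigenvalue of $L_{1}$ lying in $B(N,-1)=B(N-1,1)\subset A(N,1)$, hence is $\leq\lambda_{-1}(1)$, the largest eigenvalue of $L_{1}$ in $A(N,1)$. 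Thus $\lambda_{-1}(1)\in\overline{\Delta_{-1}}\cap R_{+}$, so there is no gap at the top; the lower end is symmetric. Consequently $\sigma(L)$ has at most $(2N-1)m-1=m(2N-1)-1$ gaps.

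The main obstacle throughout is the careful use of the endpoint identification $t=-1\leftrightarrow t=1$: one must justify that the band functions admit the stated one-sided limits as $t\to-1^{+}$ and that these limits are precisely the $t=1$ eigenvalues in the adjacent $B$-interval. This rests on $L_{-1}=L_{1}$ together with the continuity of the characteristic determinant $\Delta(\lambda,t)$ as a function of $e^{i\pi t}$ (Summary 1), and it is exactly what allows the per-$t$ gaps $D(N,t)$ and $\overline{B(k,1)}$ to close up once the union over all $t\in(-1,1]$ is taken.
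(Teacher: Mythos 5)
Your proposal is correct and follows essentially the same route as the paper: part $(a)$ is the same intermediate-value argument on the continuous band functions between $t\to-1^{+}$ and $t=1$, and parts $(b)$, $(c)$ rest, exactly as in the paper, on the identification $e^{-i\pi}=e^{i\pi}$ (so $L_{-1}=L_{1}$) to chain the closed bands across the junctions $\overline{B(k,1)}$ and then count the possible gaps among the $(2N-1)m$ bands coming from $A(N,t)$. The only point to tighten is your appeal to full one-sided limits $\lim_{t\to-1^{+}}\lambda_{s}(t)$ with exact order matching, which you flag but do not prove: the paper instead takes a bounded sequence $\lambda_{s}(t_{p})$ with $t_{p}\to-1$, extracts a convergent subsequence, and uses continuity of $\Delta(\lambda,t)$ to conclude that any limit point is an eigenvalue of $L_{1}$ in the adjacent interval --- a weaker statement that already suffices for your covering argument (and your unverified inequality $(2\pi N-\pi)^{n}+\delta_{N}(-1)<(2\pi N)^{n}$ is true, by the same kind of elementary estimate used in the proof of Theorem 1$(b)$, so it is a fixable detail rather than a gap).
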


\begin{proof}
$(a)$ By Theorem 1 and Notation 1 for each $\varepsilon>0$ the eigenvalues
$\lambda_{s}(-1+\varepsilon)$ for $s\in\left\{  \left(  k-N\right)
m+j:j=1,2,...m\right\}  $ of $L_{-1+\varepsilon}$ lie in the interval
$B(k,-1+\varepsilon).$ On the other hand, the eigenvalue $\lambda_{s}(1)$ of
$L_{1}$ lie in the interval $B(k,1).$ Therefore it follows from the definition
of $B(k,t)$ that the interval $((2\pi k-\pi+\varepsilon\pi)^{n}+\delta
_{k}(-1+\varepsilon),(2\pi k+\pi)^{n}-\delta_{k}(1))$ are contained in each of
the bands (47). Now letting $\varepsilon$ tend to zero we obtain that the
interval (45) is contained in each of the bands (47). In the same way we prove
that the interval (46) is contained in each of the bands (48)..

$(b)$ Let $\Gamma(k)$ be the closure of the union of the intervals in (47).
Since the bands in (47) have the common subinterval (45), $\Gamma(k)$ is the
closed interval of $\sigma(L)$ and $\Gamma(k)\subset I(k,t),$ where $I(k,t)$
is defined in (6). Moreover
\begin{equation}
\sigma(L_{1})\cap I(k,1)=\left\{  \lambda_{\left(  k-N\right)  m+j}%
(1):j=1,2,...m\right\}  \subset\Gamma_{k}\subset\sigma(L). \tag{49}%
\end{equation}
Now we prove that the intervals $\Gamma(k)$ and $\Gamma(k+1)$ for $k\geq N$
have a common point. Consider a sequence $\left\{  \left(  \lambda_{s}%
(t_{p}),t_{p}\right)  :p\in\mathbb{N}\right\}  \subset\Gamma(k+1)$ such that
$t_{p}\rightarrow-1$ as $p\rightarrow\infty,$ where $s=\left(  k+1-N\right)
m+j$ for some $j=1,2,...,m.$ Since $\left\{  \lambda_{s}(t_{p}):p\in
\mathbb{N}\right\}  $ is a bounded sequence there exists a convergent
subsequence $\left\{  \lambda_{s}(t_{p_{j}}):j\in\mathbb{N}\right\}  $ of this
sequence converging to $a\in\Gamma(k+1)$. Let us we prove that $a\in\Gamma
(k)$. Since $\Delta$ is a continuous function with respect to the pair
$(\lambda,t)$ and $\Delta\left(  \lambda_{s}(t_{p_{j}}),t_{p_{j}}\right)  =0$
for all $j$ we have $\Delta(a,-1)=0.$ It implies that $a$ is an eigenvalue of
$L_{1}$ lying in $I(k,1).$ Therefore it follows from (49) that $a\in\Gamma
(k)$. Thus $\Gamma(k+1)$ and $\Gamma_{k}$ is connected for $k\geq N.$ In the
same way we prove that $\Gamma(N)$ contains some eigenvalue $\lambda_{s}(1)$
of $L_{1}$ lying in $A(N,1).$ If $\lambda_{s}(1)\in\Gamma(N)$ for some
$s\in\left\{  -1,-2,...,-\left(  2N-1\right)  m\right\}  $ then $\lambda
_{-1}(1)\in\Gamma_{N}.$ Hence, in any case the last inclusion holds. Thus we
have
\begin{equation}
\lambda_{-1}(1)\in\Gamma(N),\text{ }\Gamma(k)\cap\Gamma(k+1)\neq
\varnothing\tag{50}%
\end{equation}
for $k\geq N.$ Since $\lambda_{-1}(1)\in A(N,1)=[(-2\pi N+2\pi)^{n},(2\pi
N)^{n})$ we see that $[(2\pi N)^{n},\infty)\in\sigma(L).$ In the same way we
prove that
\begin{equation}
\lambda_{-\left(  2N-1\right)  m}(1)\in\Gamma(-N),\text{ }\Gamma(-k)\cap
\Gamma(-k-1)\neq\varnothing\tag{51}%
\end{equation}
and $(-\infty,-(2\pi N)^{n}]\in\sigma(L)$.

$(c)$ It follows from $(b)$ that there may be gaps only between $\overline
{\Delta_{s}}$ and $\overline{\Delta_{s-1}}$ $\ $\ for $s\in\left\{
-1,-2,...,-\left(  2N-1\right)  m+1\right\}  .$ Therefore the number of the
gaps in the spectrum of $L$ is not greater than $\left(  2N-1\right)  m-1$.
\end{proof}

Now we consider the spectrum of $L$ when $M$ is a small number.

\begin{theorem}
If (35) holds, then the intervals $\left(  \pi^{n}+\frac{3}{10}\pi^{n},\left(
3\pi\right)  ^{n}-\frac{3}{10}3^{n-2}\pi^{n}\right)  ,$ $(\left(  -\pi\right)
^{n}+\frac{1}{5}\pi^{n},\pi^{n}-\frac{1}{5}\pi^{n})$ and $\left(  -\left(
3\pi\right)  ^{n}+\frac{3}{10}3^{n-2}\pi^{n},-\pi^{n}-\frac{3}{10}\pi
^{n}\right)  $ are contained in each of the bands $\Delta_{-s}$ for
$s\in\left\{  1,2,...m\right\}  ,$ $s\in\left\{  m+1,m+2,...2m\right\}  $ and
$s\in\left\{  2m+1,2m+2,...3m\right\}  $ respectively. The spectrum of $L$ is
$(-\infty,\infty).$
\end{theorem}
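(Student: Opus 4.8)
The plan is to prove the two assertions separately, reusing the localization of Theorem~2 and the overlapping mechanism of Theorem~5. First I would fix the standing setup: since (35) holds, Theorem~2 gives $N=2$, so $\sigma(L_t)$ is contained in the pairwise disjoint intervals $C(-1,t),C(t),C(1,t)$ and $B(k,t)$ for $|k|>1$, each carrying exactly $m$ eigenvalues of $L_t$. Reading off Notation~1 together with the geometric order $C(-1,t)<C(t)<C(1,t)$ on the real axis, the band functions $\lambda_{-s}$ with $s\in\{1,\dots,m\}$, with $s\in\{m+1,\dots,2m\}$, and with $s\in\{2m+1,\dots,3m\}$ lie, for every $t$, in $C(1,t)$, in $C(t)$, and in $C(-1,t)$ respectively.

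For the first assertion I would follow the proof of Theorem~5$(a)$, replacing $B(k,t)$ by the relevant $C$-interval. Fix such an $s$; by Theorem~4 the function $\lambda_{-s}$ is continuous on $(-1,1]$, so $\Delta_{-s}$ is a connected interval. Evaluation at $t=1$ places $\lambda_{-s}(1)$ to the right of the left endpoint of the corresponding interval (for the middle block, $\lambda_{-s}(1)$ lies in $C(t)$ at $t=1$, hence exceeds $\pi^n-\tfrac15\pi^n$), while the inclusion $\lambda_{-s}(-1+\varepsilon)\in C(-1+\varepsilon)$ forces $\lambda_{-s}(-1+\varepsilon)$ below the right endpoint of $C(-1+\varepsilon)$, whose limit as $\varepsilon\to0^{+}$ is exactly the left endpoint of the claimed interval (e.g. $-\pi^{n}+\tfrac15\pi^{n}$ for the middle block). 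The intermediate value theorem then shows $\Delta_{-s}$ contains the interval with these two endpoints for each $\varepsilon>0$, and letting $\varepsilon\to0^{+}$ yields the stated interval. Carrying this out for $C(1,t)$, $C(t)$, $C(-1,t)$ produces the three displayed intervals; in particular each lies in $\sigma(L)$.

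For the second assertion I would show that the union of all bands is connected, closed, and unbounded in both directions, hence equals $(-\infty,\infty)$. Inside each block the $m$ bands share the common overlap interval just constructed (the three central intervals for the three $C$-blocks, and the intervals (45), (46) for the outer $B$-blocks), so each block is already a connected union. Adjacent blocks are linked by a single seam identification coming from $L_1=L_{-1}$ (since $e^{i\pi\cdot1}=e^{i\pi\cdot(-1)}=-1$): at the seam $\pi^{n}$ the intervals $C(1,-1^{+})$ and $C(t)|_{t=1}$ are both centred at $\pi^{n}$ and each contains precisely $m$ eigenvalues, so the sets $\{\lambda_{-s}(-1^{+}):1\le s\le m\}$ and $\{\lambda_{-s}(1):m+1\le s\le2m\}$ coincide; matching their smallest elements gives $\lambda_{-m}(-1^{+})=\lambda_{-2m}(1)$, a value shared by $\Delta_{-m}$ and $\Delta_{-2m}$, which welds the $C(1,t)$-block to the $C(t)$-block. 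The seam at $-\pi^{n}$ is identical, and the seams at $\pm(3\pi)^{n}$ together with the half-lines $[(4\pi)^{n},\infty)$ and $(-\infty,-(4\pi)^{n}]$ are precisely the content of Theorem~5$(b)$ (through $\lambda_{-1}(1)\in\Gamma(2)$ and its mirror). Thus every pair of consecutive blocks has intersecting closures, $\overline{\bigcup_{s}\Delta_{s}}=\sigma(L)$ is a single interval, and being unbounded above and below it is $(-\infty,\infty)$.

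The main obstacle is exactly the seam analysis: the common-overlap intervals stop at $\pi^{n}-\tfrac15\pi^{n}$ and resume at $\pi^{n}+\tfrac3{10}\pi^{n}$, leaving a neighbourhood of $\pi^{n}$ (and symmetrically of $-\pi^{n}$, $\pm(3\pi)^{n}$) that the ``common subinterval'' argument does not reach. Closing it rests on the periodicity $L_1=L_{-1}$ together with the order-preserving matching of the two coincident eigenvalue blocks, for instance $\lambda_{-m}(-1^{+})=\lambda_{-2m}(1)\le\lambda_{-m-1}(1)$, which in turn relies on each $C$-interval containing exactly $m$ eigenvalues (Theorem~2). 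Verifying that this single shared value, combined with the intra-block connectivity, suffices to exclude every gap near each seam is the delicate point; once it is in place, unboundedness and closedness of $\sigma(L)$ finish the proof.
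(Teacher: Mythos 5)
Your overall strategy is the same as the paper's: the first assertion is exactly the paper's ``repeat the proof of Theorem 5$(a)$ with the $C$-intervals'' argument, and the second assertion is the paper's decomposition into the three central blocks (the paper's $\Gamma(1),\Gamma(0),\Gamma(-1)$ of (52)) glued to the outer blocks $\Gamma(\pm k)$, $k\geq2$, via (50)--(51). The one place where your write-up has a genuine gap is the seam weld at $\pm\pi^{n}$. You assert that the one-sided limits $\lambda_{-s}(-1^{+})$ exist and that the sets $\left\{  \lambda_{-s}(-1^{+}):1\leq s\leq m\right\}  $ and $\left\{  \lambda_{-s}(1):m+1\leq s\leq2m\right\}  $ coincide, and then match smallest elements to conclude $\lambda_{-m}(-1^{+})=\lambda_{-2m}(1)$. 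Neither claim follows from what you have established: the band functions are defined and continuous only on $(-1,1]$, so the existence of limits as $t\rightarrow-1^{+}$ is not free, and the coincidence of the two $m$-element families (let alone an order-preserving matching) would require a multiplicity-preserving eigenvalue-counting argument across the identification $t=-1\sim t=1$, of the type used in Theorem 3 and Theorem 1$(c)$, which you do not supply.

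The paper closes the seam with a weaker statement that is all one needs and is provable with the tools already in hand (it is what ``repeating the proof of (50) and (51)'' means there): fix one band $\Delta_{-s}$ with $1\leq s\leq m$, take any sequence $t_{p}\rightarrow-1^{+}$; the numbers $\lambda_{-s}(t_{p})\in C(1,t_{p})$ are bounded, so a subsequence converges to some $a$. Since the characteristic determinant $\Delta(\lambda,t)$ is jointly continuous and is a polynomial in $e^{i\pi t}$, we get $\Delta(a,-1)=\Delta(a,1)=0$, so $a$ is an eigenvalue of $L_{1}$; moreover $a$ lies in the closure of $C(1,-1)$, i.e. in $\left[  \pi^{n}-\tfrac{3}{10}\pi^{n},\pi^{n}+\tfrac{3}{10}\pi^{n}\right]  $, and by the pairwise-disjoint localization of Theorem 2 applied at $t=1$ the only eigenvalues of $L_{1}$ in that set are those lying in the interval $C(t)$ evaluated at $t=1$, namely $\lambda_{-s}(1)$ for $m+1\leq s\leq2m$. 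Hence $a$ belongs both to the closure of $\Delta_{-s}\subset\Gamma(1)$ and to $\Gamma(0)$, so the two closed intervals share a point; no order-preserving matching and no existence of the full limits is needed. With this replacement (and its mirror at $-\pi^{n}$), your argument becomes the paper's proof.
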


\begin{proof}
Using Theorem 2 and Notation1 and repeating the proof of Theorem 5$(a)$ we get
the proof of the first statement. If (35) holds then $N=2$. Therefore (50) and
(51) holds for $N=2.$ Now introduce the notations%
\begin{equation}
\Gamma(1)=%
%TCIMACRO{\tbigcup \limits_{s=1}^{m}}%
%BeginExpansion
{\textstyle\bigcup\limits_{s=1}^{m}}
%EndExpansion
\overline{\Delta_{-s}},\text{ }\Gamma(0)=%
%TCIMACRO{\tbigcup \limits_{s=m+1}^{2m}}%
%BeginExpansion
{\textstyle\bigcup\limits_{s=m+1}^{2m}}
%EndExpansion
\overline{\Delta_{-s}},\text{ }\Gamma(-1)=%
%TCIMACRO{\tbigcup \limits_{s=2m+1}^{3m}}%
%BeginExpansion
{\textstyle\bigcup\limits_{s=2m+1}^{3m}}
%EndExpansion
\overline{\Delta_{-s}}. \tag{52}%
\end{equation}
Arguing as in the proof of Theorem 5$(b)$ we obtain that $\Gamma(-1),$
$\Gamma(0)$ and $\Gamma(1)$ are the closed intervals. Moreover it follows from
(50) and (51) for $N=2$ that $\lambda_{-1}\in\Gamma(2)$ and $\lambda_{-\left(
2N-1\right)  m}(1)\in\Gamma(-2).$ This with (52) implies that $\Gamma(1)\cap$
$\Gamma(2)\neq\varnothing$ and $\Gamma(-1)\cap$ $\Gamma(-2)\neq\varnothing.$
Moreover, repeating the proof of (50) and (51), we get $\Gamma(1)\cap$
$\Gamma(0)\neq\varnothing$ and $\Gamma(-1)\cap$ $\Gamma(0)\neq\varnothing.$
Thus the intervals $\Gamma(k)$ and $\Gamma(k+1)$ have a common point for all
$k\in\mathbb{Z}$. It means that there are no gaps in the spectrum and
$\sigma(L)=(-\infty,\infty).$
\end{proof}

\end{document}